\newtheorem{thm}{Theorem}[section]
\newtheorem{cor}[thm]{Corollary}
\newtheorem{lem}[thm]{Lemma}
\newtheorem{prop}[thm]{Proposition}
\theoremstyle{definition}
\newtheorem*{xrem}{Remark}
\numberwithin{equation}{section}
\newcommand{\Z}{\mathbb Z}
\newcommand{\N}{\mathbb N}
\newcommand{\R}{\mathbb R}
\newcommand{\Q}{\mathbb Q}
\newcommand{\DD}{\mathcal D}
\newcommand{\LL}{\mathcal L}
\newcommand{\RR}{\mathcal R}
\newcommand{\NN}{\mathcal N}
\renewcommand{\SS}{\mathcal S}
\newcommand{\PP}{\mathcal P}
\newcommand{\Li}{\operatorname{Li}}
\title[Statistical properties of continued fractions]{On certain statistical properties of continued fractions
with even and with odd partial quotients}
\author{Florin P. Boca}
\author{Joseph Vandehey}
\address{Department of Mathematics,
University of Illinois,
1409 W. Green Street,
Urbana, IL 61801}
\email{fboca@illinois.edu,\ 
vandehe2@illinois.edu}
\begin{document}


\begin{abstract}
We prove results concerning the joint limiting distribution of the renewal time of denominators and
consecutive digits of random irrational numbers in the case of continued fractions with even partial quotients,
with odd partial quotients, and for Nakada's $\alpha$-expansions.
\end{abstract}

\maketitle

\renewcommand{\thefootnote}{}

\footnote{2010 \emph{Mathematics Subject Classification}: Primary 11A55; Secondary 11K50, 37A45.}

\footnote{\emph{Key words and phrases}: Even continued fractions, odd continued fractions, Nakada $\alpha$-expansion, nearest integer continued fraction, renewal time, digits, joint limiting distribution,
successive convergents.}

\footnote{FPB is a  member of the Institute of Mathematics ``Simion Stoilow" of the Romanian Academy,
Calea Grivi\c tei 21, RO-01072 Bucharest, Sector 1, Romania}

\renewcommand{\thefootnote}{\arabic{footnote}}
\setcounter{footnote}{0}

\section{Introduction}

Let $(a_n)$ (respectively, $(q_n)$) denote the sequence of digits (resp., denominators of the convergents) in the regular continued fraction (RCF)
expansion of an irrational number. For each $R>1$, consider the \emph{renewal time}
$n_R:=\min\{ n: q_n >R \}$, so that $q_{n_R-1} \leqslant R < q_{n_R}$. As a consequence of
their renewal-type theorem for the natural extension of the Gauss map associated with regular
continued fractions, Sinai and Ulcigrai \cite{SU} proved the existence of the joint limiting distribution
of $( q_{n_R-1}/R, R/q_{n_R}, a_{n_R-K},\ldots,a_{n_R+K})$, with $K$ a fixed
nonnegative integer, as $R\rightarrow\infty$. The classical Gauss-Kuz'min statistics give the probability of a random $x$ in $[0,1]$ having a prescribed string of digits in its continued fraction expansion at the $n$th position, for large $n$; the joint limiting distribution studied in \cite{SU,Ust} gives the probability of a random $x$ in $[0,1]$ having a prescribed string of digits in its continued fraction expansion at the first place where the denominator of the convergent is larger than $R$, for large $R$.  The joint limiting distribution may therefore be considered an analogue of Gauss-Kuz'min statistics. Employing an abstract characterization of denominators of
successive convergents in the regular continued fraction expansion ${\rm RCF}(x)$ of $x$, Ustinov succeeded
in explicitly computing this limiting distribution in the RCF case \cite{Ust}.

Sinai and Ulcigrai's result has been subsequently extended to the situation of continued fractions with even partial
quotients (ECF) by Cellarosi \cite{Cel}. The ECF limiting distribution was further used
in the renormalization of theta sums---that is, replacing the theta sum
$
\sum e^{\pi i \omega n^2}
$
with a theta sum of the type
$
\sum e^{-\pi i n'^2/\omega}
$
modulo a rescaling, rotation, and small error term---as the map $\omega \rightarrow -1/\omega$ modulo $2$ is closely related to the forward shift of even continued fractions.
This has led to some new results about the distribution of
normalized theta sums and geometrical properties of their associated curlicues \cite{Cel2,Si}.

This paper studies this type of limiting distributions in the case of three types of continued fractions:
ECF, OCF (continued fractions with odd partial quotients), and ${\rm NCF}_\alpha$ (the Nakada $\alpha$-expansions, which include NICF,
or continued fraction to the nearest integer, as a special case).
In the ECF case we provide a direct proof of the main result in \cite{Cel} while making the limiting
distribution explicit. The analogous problem is also solved in the OCF case, for which no ergodic theoretical approach is known at this time.
As in \cite{Ust}, the key tool is providing an abstract characterization for pairs of successive
convergents in ${\rm ECF}(x)$ and ${\rm OCF}(x)$, which may be of independent interest.
The OCF case is the most intricate, because the sequence of
denominators of successive convergents in ${\rm OCF}(x)$ is not necessarily increasing as in the RCF, ECF, or ${\rm NCF}_\alpha$ cases.
Finally we provide an explicit relation between the ${\rm NCF}_\alpha$ limiting joint distribution and the distribution computed in \cite{Ust}.

Concretely, for a given type of continued fraction expansion (ECF, OCF, or ${\rm NCF}_\alpha$), consider the \emph{renewal time}
\begin{equation*}
n_R=\min\{ n\in\N: q_n >R\} =\min\{ n\in\N : q_{n-1}\leqslant R <q_n \}, \quad R>1,
\end{equation*}
and the joint limiting distribution of
$(q_{n_R-1}/R, R/q_{n_R},$ $ \omega_{n_R-K} , \ldots , \omega_{n_R+K} )$ with $\omega_k=(a_k,e_k)$,
for fixed $K$, as $R\rightarrow\infty$.  Here again, $\omega_k$ denote the continued fraction digits and $q_n$ denote the denominators of the convergents for a given type of CF expansion (see Section \ref{section:basicproperties} for more details).

We will evaluate  the Lebesgue measure
$\LL^{E/O,\pm}_{x_1,x_2,x_3,x_4}(R)$ of the set of
 numbers $x\in\Omega:=[0,1]\setminus \mathbb{Q}$ for which there exist
successive convergents $P/Q,P'/Q'$ in ${\rm ECF}(x)$
(respectively in ${\rm OCF}(x)$) such that for given $x_1,x_2,x_3,x_4$ the following conditions are satisfied:
\begin{equation}\label{1.1}
\frac{Q}{R}\leqslant x_1,\quad \frac{R}{Q^\prime} \leqslant x_2,\quad
\frac{Q}{Q^\prime}\leqslant x_3 ,
\end{equation}
\begin{equation}\label{1.2}
0\leqslant \frac{Q^\prime x-P^\prime}{-Qx+P} \leqslant x_4\quad
\mbox{\rm respectively} \quad
-x_4 \leqslant \frac{Q^\prime x-P^\prime}{-Qx+P}\leqslant 0,
\end{equation}
depending on the choice of the $\pm$ sign.
In both ECF and OCF situations, we take $x_1,x_2,x_3,x_4$ $\in (0,1]$.\footnote{If any of the parameters equals $0$, then $\LL$ equals $0$ as well, so we ignore this degenerate case.} In the OCF case, the ratio $Q/Q'$ of successive denominators can in fact be any rational number in the interval $(0,G)$, but since in the definition of $n_R$ we are interested only in $Q\leqslant R< Q'$, we can restrict to $x_3 \leqslant 1$ in the definition of $\LL^{O,\pm}$. The golden ratios $G=(1+\sqrt{5})/2$ and $g=1/G=(-1+\sqrt{5})/2$ will be used often.

The terms $q_{n_R-1}/R$ and $R/q_{n_R}$ in the joint limiting distribution clearly relate to the parameters $x_1$ and $x_2$ in the function $\mathcal{L}$.
Likewise, the digits $\omega_k$ in the joint limiting distribution relate to the parameters $x_3$ and $x_4$ in $\mathcal{L}$ due to equalities \eqref{2.4} and \eqref{2.6} below.

The main result of this paper shows that $\LL^{E/O,\pm} (R)$ has an explicitly computable limiting distribution as $R\rightarrow\infty$.

\begin{thm}\label{Thm1}
The joint distributions $\LL^{E/O,\pm}_{x_1,x_2,x_3,x_4}(R)$ exist as $R\rightarrow \infty$ and
\begin{equation}\label{1.3}
\LL^{E,\pm}_{x_1,x_2,x_3,x_4} (R) = \frac{2F_\pm}{3\zeta(2)}+O_\varepsilon \big( R^{-1+\varepsilon}\big),
\end{equation}
\begin{align}\label{1.4}
\LL^{O,+}_{x_1,x_2,x_3,x_4} (R) & = \frac{F_+ -D_1}{\zeta(2)}+O_\varepsilon
\big( R^{-1/2+\varepsilon}\big),\\ \LL^{O,-}_{x_1,x_2,x_3,x_4} (R) & = \frac{F_- -D_2-D_3}{\zeta(2)}+O_\varepsilon
\big( R^{-1/2+\varepsilon}\big) ,\notag
\end{align}
where $F_\pm=F_\pm(x_1,x_2,x_3,x_4)$ and $D_i=D_i(x_1,x_2,x_3,x_4)$ are given by\footnote{In this paper the convention is that $\int_a^b =0$ when $a\geqslant b$.}
\begin{equation}\label{1.6}
F_\pm=\mp \begin{cases}
 \Li_2 (\mp x_1x_2x_4) & \mbox{\rm if $x_3 \geqslant x_1x_2,$} \\
\Li_2 (\mp x_3 x_4) - \log (1\pm x_3 x_4) \log\frac{x_1x_2}{x_3} & \mbox{\rm if $x_3 < x_1x_2,$}
\end{cases}
\end{equation}
\begin{equation}\label{1.7}
\begin{split}
D_2 & = F_- (x_1,x_2,x_3,x_4)-F_- (x_1,x_2,\min\{ x_3, g^2\} ,x_4), \\
D_1 & =\sum_{\ell \geqslant 1} I_\ell^+,\qquad D_3 =\sum_{\ell\geqslant 2} I_\ell^-,
\end{split}
\end{equation}
with
\begin{equation}\label{1.8}
I_\ell^\pm = \int^{A_\ell}_{1/x_2} dx
\int^{B_\ell (x)}_{x/(2\ell+g )}\frac{x_4\, dy}{y (y \pm x_4 x)},
\end{equation}
where
\begin{equation*}
A_\ell =(2\ell+g )x_1,\quad
B_\ell (x) =B_{\ell,x_2,x_3}(x)=\min\bigg\{ x_3 x,x_1, \frac{x}{2\ell},\frac{x-1}{2\ell -1}\bigg\}
.
\end{equation*}
\end{thm}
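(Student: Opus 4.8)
The plan is to reduce the computation of $\LL^{E/O,\pm}_{x_1,x_2,x_3,x_4}(R)$ to a weighted lattice-point count over admissible pairs of successive-convergent denominators, and then to pass from this count to an integral whose value is $F_\pm$. The starting point is the abstract characterization of pairs of successive convergents $\frac{P}{Q},\frac{P'}{Q'}$ in $ECF(x)$ and $OCF(x)$ (established in Section \ref{section:basicproperties}), which specifies the admissible quadruples $(P,Q,P',Q')$ together with the set of $x\in\Omega$ realizing them. For each admissible quadruple this set is an interval parametrized by the ``tail'' variable
\[
t=\frac{Q'x-P'}{-Qx+P},
\]
and \eqref{1.2} is precisely the requirement $0\leqslant t\leqslant x_4$ ($ECF$) or $-x_4\leqslant t\leqslant 0$ ($OCF$).

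First I would invert this relation to $x=\frac{Pt+P'}{Qt+Q'}$, so that $\left|\frac{dx}{dt}\right|=\frac{|PQ'-QP'|}{(Qt+Q')^2}=\frac{1}{(Qt+Q')^2}$. Integrating over the relevant $t$-range gives the contribution of a single admissible quadruple to $\LL$, namely $\frac{x_4}{Q'(Q'\pm Qx_4)}$ (with $+$ for $ECF$, $-$ for $OCF$); summing over all admissible quadruples subject to \eqref{1.1} turns $\LL(R)$ into
\[
\sum_{(Q,Q')}\frac{x_4}{Q'(Q'\pm Qx_4)},\qquad Q\leqslant x_1R,\ \ Q'\geqslant R/x_2,\ \ Q\leqslant x_3Q',
\]
the sum ranging over the admissible denominator pairs for the given $CF$ type, where the relations \eqref{2.4} and \eqref{2.6} translate the digit constraints into the parameters $x_3,x_4$.

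Next I would rescale $Q=Ru$, $Q'=R/v$ and approximate this sum by $(\text{density})\cdot\iint\frac{x_4}{1\pm x_4uv}\,du\,dv$ over $\{u\leqslant x_1,\,v\leqslant x_2,\,uv\leqslant x_3\}$, the density being the asymptotic frequency of admissible coprime pairs; the admissibility and parity restrictions built into $ECF$ and $OCF$ produce the constants $\frac{2}{3\zeta(2)}$ and $\frac{1}{\zeta(2)}$ respectively. Carrying out the $v$-integration first gives $\pm\frac1u\log(1\pm x_2x_4u)$, and the identity $\int_0^{x_1}\frac{\log(1\pm au)}{u}\,du=-\Li_2(\mp ax_1)$ then yields $F_\pm=\mp\Li_2(\mp x_1x_2x_4)$ when $x_3\geqslant x_1x_2$; when $x_3<x_1x_2$ the constraint $uv\leqslant x_3$ truncates the rectangle, and splitting the region at $u=x_3/x_2$ produces the extra $\log(1\pm x_3x_4)\log\frac{x_1x_2}{x_3}$ term in \eqref{1.6}. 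The replacement of the sum by the integral must be controlled by estimating the associated incomplete (Kloosterman-type) exponential sums, giving the error $O_\varepsilon(R^{-1+\varepsilon})$ in the $ECF$ case.

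The hard part will be the $OCF$ case, where the denominators of successive convergents need not increase, so the admissible region in the $(Q,Q')$-plane is genuinely larger and the naive count overshoots the answer. Here I expect the golden-ratio constraint $Q/Q'\leqslant g^2$ to account for the term $D_2$ in \eqref{1.7}, while the families of configurations indexed by the odd partial quotient $2\ell\pm1$---whose boundaries are exactly the quantities $A_\ell=(2\ell+g)x_1$ and $B_\ell(x)=\min\{x_3x,x_1,\frac{x}{2\ell},\frac{x-1}{2\ell-1}\}$ appearing in \eqref{1.8}---must be subtracted off as the integrals $I_\ell^\pm$, giving $D_1=\sum_{\ell\geqslant1}I_\ell^+$ and $D_3=\sum_{\ell\geqslant2}I_\ell^-$. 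Pinning down these correction terms exactly, together with the fact that the non-monotonicity degrades the error to $O_\varepsilon(R^{-1/2+\varepsilon})$, is the crux of the argument.
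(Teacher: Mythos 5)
Your reduction of $\LL^{E/O,\pm}(R)$ to the weighted lattice sum $\sum_{(Q,Q')} \frac{x_4}{Q'(Q'\pm x_4 Q)}$ over admissible pairs, and your evaluation of the limiting integral (splitting at $u=x_3/x_2$ when $x_3<x_1x_2$ to get the two cases of \eqref{1.6}), coincide with the paper's argument; so does the source of the constant $\tfrac{2}{3\zeta(2)}$, namely that the $ECF$ matrices fall into the two residue classes $I,J$ of $SL_2(\Z/2\Z)$, each carrying density $\tfrac{1}{3\zeta(2)}$. Two corrections on this part. First, the superscript $\pm$ labels the sign of the tail $E_M(x)$ in \eqref{1.2}, not the continued-fraction type: both signs occur for both $ECF$ and $OCF$, which is essential because the $OCF$ corrections $D_1$ versus $D_2,D_3$ differ precisely according to that sign. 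Second, no Kloosterman-type estimates are needed: given $Q$ coprime to $Q'$, the numerator $P'$ is determined modulo $Q'$ and ranges over essentially a full period, so the elementary $\varphi(q)/q$ averages of Lemmas \ref{Mob1}--\ref{Mob2} (with divisor-function error) already yield $O_\varepsilon(R^{-1+\varepsilon})$.

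The genuine gap is exactly where you place it, and it is not filled: the $OCF$ correction terms. Two separate mechanisms are needed, neither of which follows from your heuristics. (1) The characterization of successive $OCF$ convergents (Proposition \ref{Prop5}) is sign-sensitive: for $g<\lambda_M\leqslant 2+g$ a matrix in $\RR_O$ corresponds to a convergent pair only when $E_M(x)>0$. Hence in the $\LL^{O,-}$ count every matrix with $\lambda_M<G^2$, i.e.\ $Q/Q'>g^2$, is spurious, and deleting the range $g^2<Q/Q'\leqslant x_3$ from $F_-$ is what produces $D_2$ in \eqref{1.7}; there is no such term for $\LL^{O,+}$ because every $M\in\RR_O$ with $0<E_M<1$ is a convergent pair. (2) Since denominators are not monotone, a given $R$ may be straddled by two distinct successive-convergent pairs; one needs Lemma \ref{Lemma8} to show there are at most two, and the recursion $M=M_0\left(\begin{smallmatrix}0&e_M\\ 1&k_M\end{smallmatrix}\right)$ from Lemma \ref{Lemma6} to show the extraneous (non-renewal) pair occurs exactly when $\lambda_M\in[2\ell,2\ell+g)$ and $Q_0=Q'-(2\ell-1)Q>R$. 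These constraints, after rescaling by $R$, are precisely the limits $x/(2\ell+g)\leqslant y\leqslant B_\ell(x)$ in \eqref{1.8}, with the term $\frac{x-1}{2\ell-1}$ in $B_\ell$ coming from $Q_0>R$; the sign condition in Proposition \ref{Prop5} then dictates $\ell\geqslant 1$ for $D_1$ but $\ell\geqslant 2$ for $D_3$. Finally, the exponent $-1/2$ is not a vague consequence of non-monotonicity: it arises from truncating the $\ell$-sum at $\ell\leqslant R^{1/2}$ and balancing the tail bound $I_\ell^+\ll\ell^{-2}$ against the accumulated per-$\ell$ summation errors. Without these ingredients the $OCF$ half of the theorem is asserted rather than proved.
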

The integrals $I_\ell^\pm$ can be written explicitly as a combination of
logarithms and dilogarithms.

Kraaikamp's metric theory for $S$-expansions \cite{Kra} provides immediate characterizations of
pairs of successive convergents for such continued fractions, which are obtained from RCF only by
singularization (see the remark at the end of Section 3 for definition of singularization).
In the last section we show how to compute the joint limiting distribution associated as above with
Nakada's $\alpha$-expansions \cite{Nak} for $\frac{1}{2}\leqslant \alpha \leqslant 1$.
The cases $\alpha=1$ and $\alpha=\frac{1}{2}$ are best known, corresponding to the RCF and
NICF (continued fraction to the nearest integer). The latter was introduced by Minnigerode \cite{Min} and was also studied in \cite{Ada,Sch2,Wil}.
Our calculations show explicit connections with Ustinov's RCF distribution.

\section{Basic ECF and OCF properties}\label{section:basicproperties}

For each $x\in \Omega$, the ECF (respectively, OCF) expansion of $x$ is given by
\begin{equation}\label{2.1}
x=\cfrac{1}{a_1+\cfrac{e_1}{a_2+\cfrac{e_2}{a_3+\cfrac{e_3}{\ddots}}}} =[[(a_1,e_1),(a_2,e_2),(a_3,e_3),\ldots ]],
\end{equation}
where $e_n \in\{ \pm 1\}$ and all $a_n$'s are even positive integers (respectively, all $a_n$'s are odd positive integers
with $a_n+e_n \geqslant 2$). For more details see \cite{DHKM,Kra,Mas,Rie,Sch,Sch2}. As in \cite{DHKM,Mas}, consider the
``flipped" continued fraction map $T_D:[0,1]\rightarrow[0,1]$ for a subset $D$ of $[0,1]$, defined by $T_D(0)=0$, $T_D(1)=1$, and
\begin{equation*}
T_D(x) =\begin{cases}
\{ 1/x \} & \mbox{\rm if $x\in (0,1)\setminus D,$} \\
\displaystyle 1-\{ 1/x\}  & \mbox{\rm if $x\in D,$} \end{cases}
\end{equation*}
with auxiliary functions
\begin{equation*}
e_D (x) =\begin{cases} 1 & \mbox{\rm if $x\in [0,1]\setminus D,$} \\
-1 & \mbox{\rm if $x\in D,$} \end{cases}
\quad a_D (x) =\begin{cases} [ 1/x ] & \mbox{\rm if $x\in [0,1] \setminus D,$} \\
1+ [ 1/x] & \mbox{\rm if $x\in D.$} \end{cases}
\end{equation*}
Note that
\begin{equation*}
T_D(x)=e_D(x) \bigg( \frac{1}{x}-a_D(x)\bigg),\qquad \forall x\in (0,1).
\end{equation*}
Consider the sets
\begin{equation*}
D_O := \bigcup\limits_{n\in 2\N} \bigg[ \frac{1}{n+1},\frac{1}{n}\bigg),\qquad
D_E := [0,1) \setminus D_O=\bigcup\limits_{n\in 2\N -1} \bigg[ \frac{1}{n+1},\frac{1}{n}\bigg).
\end{equation*}
Denote $D=D_E$ in the ECF case, respectively $D=D_O$ in the OCF case.
In both ECF or OCF situations
the signs $e_n=e_n (x)$ and the digits $a_n=a_n(x)$ are given, for $x\in\Omega$, by
\begin{equation*}
e_0=1,\quad e_n =e_D(t_{n-1}), \quad
a_0=0,\quad a_n=a_D(t_{n-1}),
\end{equation*}
where $t_n=t_n(x)=T_D^n (x)$.  On the $D$-continued fraction expansion the iterates of the Gauss type map $T_D$ act as a shift map by
\begin{equation*}
T_D^n [[(a_1,e_1),(a_2,e_2),\ldots ]]= [[ (a_{n+1},e_{n+1}),(a_{n+2},e_{n+2}),\ldots ]],
\quad \forall n\in\N_0 .
\end{equation*}

The $D$-convergents $p_n/q_n$ are defined by
\begin{equation}\label{2.2}
\begin{cases}
p_{-1}=1,\  p_0=0,\  p_n=a_n p_{n-1}+e_{n-1}p_{n-2} , \\
q_{-1}=0,\  q_0=1,\  q_n =a_n q_{n-1}+e_{n-2} q_{n-2} ,
\end{cases}
\end{equation}
or in equivalent formulation
\begin{align}\label{2.3}
\left( \begin{matrix} p_{n-1} & p_n \\ q_{n-1} & q_n \end{matrix} \right) & =
\left( \begin{matrix} p_{n-2} & p_{n-1} \\ q_{n-2} & q_{n-1} \end{matrix} \right)
\left( \begin{matrix} 0 & e_{n-1} \\ 1 & a_n \end{matrix} \right) = \cdots \\ &
=\left( \begin{matrix} 0 & e_0 \\ 1 & a_1 \end{matrix}\right)
\left( \begin{matrix} 0 & e_1 \\ 1 & a_2 \end{matrix}\right) \cdots
\left( \begin{matrix} 0 & e_{n-1} \\ 1 & a_n \end{matrix}\right),
\quad \forall n\in \N.\notag
\end{align}
The following elementary fundamental relations are satisfied:
\begin{equation*}
\begin{split}
& p_{n-1} q_n -p_n q_{n-1}=(-1)^k e_0 e_1 \cdots e_{n-1} =:\delta_n,\\ &
\frac{p_{n-1}}{q_{n-1}}-\frac{p_n}{q_n} =\frac{\delta_n}{q_{n-1}q_n},
\quad \forall n\in\N_0 , \\
&
x=\frac{p_n+p_{n-1} e_n t_n}{q_n+q_{n-1} e_n t_n},\quad \forall n\in\N.
\end{split}
\end{equation*}
The latter equation is equivalent to
\begin{equation}\label{2.4}
e_n t_n =e_n T_D^n (x)
 =\frac{q_n x-p_n}{-q_{n-1} x+p_{n-1}},\quad
\forall n\in\N .
\end{equation}
Upon \eqref{2.4} we infer
\begin{equation}\label{2.5}
0< \bigg| \frac{q_n x-p_n}{-q_{n-1}x+p_{n-1}} \bigg| <1,\quad
\forall x\in\Omega,\ \forall n\in\N .
\end{equation}
It is well-known and plain to check for every continued fraction that if
$x$ is as in \eqref{2.1}, then
\begin{equation}\label{2.6}
\frac{q_{n-1}}{q_n}  =[[(a_n,e_{n-1}),(a_{n-1},e_{n-2}),\ldots,(a_2,e_1),(a_1,\ast)]],
\quad \forall n\in \N ,
\end{equation}
where $(a_1,\ast)$ means that the finite expansion terminates with $a_1$.

\section{Successive ECF and OCF convergents}

In $\operatorname{GL}_2(\Z)$ consider the matrices
\begin{equation*}
I=\left( \begin{matrix} 1 & 0 \\ 0 & 1 \end{matrix} \right),\quad
J=\left( \begin{matrix} 0 & 1 \\ 1 & 0 \end{matrix}\right) ,\quad
A=\left( \begin{matrix} 0 & 1 \\ 1 & 1 \end{matrix}\right), \quad
B=A^2 =\left( \begin{matrix} 1 & 1 \\ 1 & 0 \end{matrix}\right),
\end{equation*}
and denote their images in $\operatorname{SL}_2(\Z /2\Z)$ by $[I]$, $[J]$, $[A]$, $[B]$.
Clearly $\{ [I],[J]\}$ forms a subgroup on two elements of $\operatorname{SL}_2 (\Z / 2\Z)$
and $\{ [I],[A],[B]\}$ forms a subgroup on three elements of $\operatorname{SL}_2 (\Z / 2\Z)$.
Consider the sets
\begin{equation*}
\RR=\left\{ M=\left( \begin{matrix} P & P^\prime \\ Q & Q^\prime \end{matrix}\right)
:  0\leqslant P \leqslant Q,\ 1\leqslant P^\prime \leqslant Q^\prime,  \right\} ,
\end{equation*}
\begin{equation*}
\RR_E:=\{ M\in \RR: 1\leqslant Q\leqslant Q^\prime,\,
M \equiv I\ \mbox{\rm or}\ J \hspace{-5pt} \pmod{2} \},
\end{equation*}
\begin{equation*}
\RR_O:=\left\{ M\in\RR: \lambda_M  > g,\
M\equiv I, A, \ \mbox{\rm or} \ B \hspace{-5pt}\pmod{2}\right\} .
\end{equation*}
For $M\in \RR$ denote
\begin{equation}\label{3.1}
\lambda_M =\frac{Q^\prime}{Q},\qquad E_M(x)=\frac{Q^\prime x-P^\prime}{-Qx+P} ,\quad
x\notin \Q .
\end{equation}

\subsection{Successive convergents for ${\rm ECF}(x)$}

\begin{lem}\label{Lemma2}
In the ECF expansion, $q_k \geqslant q_{k-1} \geqslant 1$,
$p_{k+1} \geqslant p_k \geqslant 1$, and $q_k -p_k \geqslant
q_{k-1}-p_{k-1} \geqslant 1$ for every $k\geqslant 1$.
\end{lem}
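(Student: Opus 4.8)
The plan is to prove all three chains at once by observing that the numerators, the denominators, and their differences obey the \emph{same} second-order recurrence, and then to run a single two-line inductive step. Using the matrix form \eqref{2.3} (which I take as authoritative over the componentwise display, so that $q_n=a_nq_{n-1}+e_{n-1}q_{n-2}$ with the same coefficient $e_{n-1}$ as in $p_n=a_np_{n-1}+e_{n-1}p_{n-2}$), subtraction shows that $r_n:=q_n-p_n$ satisfies $r_n=a_nr_{n-1}+e_{n-1}r_{n-2}$ as well. Hence each of $(q_n)$, $(p_n)$, $(r_n)$ has the form $u_n=a_nu_{n-1}+e_{n-1}u_{n-2}$ with $a_n\in 2\N$, so $a_n\geqslant 2$, and $e_{n-1}\in\{\pm1\}$.

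Next I would isolate the crucial step: if $u_{n-1}\geqslant u_{n-2}\geqslant 0$, then $u_n\geqslant u_{n-1}$. When $e_{n-1}=+1$ this is immediate, since $u_n=a_nu_{n-1}+u_{n-2}\geqslant 2u_{n-1}\geqslant u_{n-1}$. When $e_{n-1}=-1$ one writes $u_n=a_nu_{n-1}-u_{n-2}\geqslant 2u_{n-1}-u_{n-2}\geqslant 2u_{n-1}-u_{n-1}=u_{n-1}$, where the first inequality uses $a_n\geqslant 2$ and $u_{n-1}\geqslant 0$, and the second uses the monotonicity hypothesis $u_{n-2}\leqslant u_{n-1}$. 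This single computation is really the heart of the matter: the evenness condition $a_n\geqslant 2$ is exactly what allows the recurrence to absorb the possibly negative term $-u_{n-2}$ and stay non-decreasing. Combined with $u_{n-1}\geqslant 1$ from the inductive hypothesis, it also preserves $u_n\geqslant 1$.

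It then remains to feed in the correct seed data. For $(q_n)$ one has $q_{-1}=0$, $q_0=1$, $e_0=1$, giving $q_1=a_1\geqslant 2$, hence $q_1\geqslant q_0=1$ and $q_0\geqslant q_{-1}=0$; the step propagates $q_k\geqslant q_{k-1}\geqslant 1$ for all $k\geqslant 1$. For $(p_n)$ one has $p_0=0$, $p_1=e_0p_{-1}=1$, $p_2=a_2\geqslant 2$, so $p_2\geqslant p_1=1$, and the step yields $p_{k+1}\geqslant p_k\geqslant 1$ for $k\geqslant 1$ (the chain starts at index $1$ precisely because $p_0=0$). For $(r_n)$ one has $r_0=1$ and $r_1=a_1-1\geqslant 1$, so $r_1\geqslant r_0=1$, and the step gives $q_k-p_k\geqslant q_{k-1}-p_{k-1}\geqslant 1$ for $k\geqslant 1$.

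I do not expect any genuine obstacle; the proof is an elementary induction. The only points that demand care are (i) reading off both recurrences from \eqref{2.3} so that $p_n$ and $q_n$ carry the identical coefficient $e_{n-1}$, which is what makes $r_n=q_n-p_n$ satisfy the recurrence and hence lets the third chain be treated on the same footing as the first two; and (ii) tracking the base indices, since $p_0=0$ and $q_{-1}=0$ force the ``$\geqslant 1$'' part of each statement to begin one step after the raw recurrence is first applied. Beyond these bookkeeping details, everything reduces to the two-case estimate above.
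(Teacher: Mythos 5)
Your proof is correct and follows essentially the same route as the paper: all three sequences satisfy the common recurrence $u_n=a_nu_{n-1}+e_{n-1}u_{n-2}$, the inductive step is the estimate $u_n\geqslant 2u_{n-1}-u_{n-2}\geqslant u_{n-1}$ (the paper handles both signs of $e_{n-1}$ in this one line rather than splitting cases), and the three chains are seeded at the same indices you identify. Your remark that the matrix form \eqref{2.3} should be read as authoritative over the componentwise display \eqref{2.2} is also well taken.
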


\begin{proof}
Let $(x_n)$ be a sequence defined by $x_n =a_n x_{n-1}+ e_{n-1} x_{n-2}$
with $a_n$ an even positive integers and $e_n \in \{ \pm 1\}$. Suppose that
$x_{k_0} \geqslant x_{k_0-1} \geqslant 1$ for some $k_0 \geqslant 1$.
Then $x_{k_0+1} \geqslant 2x_{k_0} -x_{k_0-1} \geqslant x_{k_0}$.
This shows inductively that $x_n \geqslant x_{n-1} \geqslant 1$ for every
$n\geqslant k_0$. The statement follows by taking $(x_n,k_0)=(q_n,1)$,
$(x_n,k_0)=(p_n,2)$, and respectively $(x_n,k_0)=(q_n-p_n,1)$.
\end{proof}

Furthermore, since $p_{n-1} q_n -p_n q_{n-1}=\pm 1$, it follows that
$q_n (x) > q_{n-1} (x)$, for all $n\geqslant 2$ and $x\in \Omega$.

\begin{prop}\label{Prop3}
For each $x\in\Omega$ the following are equivalent:
\begin{enumerate}
\item[(i)]
$P/Q,P^\prime/Q^\prime$ are successive convergents in ${\rm ECF}(x)$.
\item[(ii)]
$M=\Big( \begin{smallmatrix} P & P^\prime \\ Q & Q^\prime \end{smallmatrix}\Big)
\in \RR_E$ and $0< \vert E_M (x)\vert <1$.
\end{enumerate}
\end{prop}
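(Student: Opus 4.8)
The plan is to prove the two implications separately, with (i)$\Rightarrow$(ii) being essentially bookkeeping on the product formula \eqref{2.3}, and (ii)$\Rightarrow$(i) being the substantive part, handled by running the $ECF$ algorithm backwards. Throughout I use that $\det M=PQ'-P'Q=\pm1$, as the definition of $\RR$ intends; this is genuinely needed, since for a single $x$ the inequality $0<|E_M(x)|<1$ alone does not force $|\det M|=1$ (for instance $M=\left(\begin{smallmatrix}0&3\\1&4\end{smallmatrix}\right)$ lies in $\RR_E$ and satisfies $0<|E_M(x)|<1$ on a range of $x$, yet $\det M=-3$, so it cannot be a matrix of successive convergents).

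For (i)$\Rightarrow$(ii): writing $M=\left(\begin{smallmatrix}p_{n-1}&p_n\\q_{n-1}&q_n\end{smallmatrix}\right)$ via \eqref{2.3}, each factor $\left(\begin{smallmatrix}0&e_{j-1}\\1&a_j\end{smallmatrix}\right)$ reduces mod $2$ to $J$, because $a_j$ is even and $e_{j-1}=\pm1$ is odd; hence $M\equiv J^n\in\{I,J\}\pmod2$. The inequalities $0\le P\le Q$, $1\le P'\le Q'$ and $1\le Q\le Q'$ follow directly from Lemma \ref{Lemma2} together with the remark that $q_n>q_{n-1}$. Finally \eqref{2.4} identifies $E_M(x)$ with $e_nt_n=e_nT_D^n(x)$, and \eqref{2.5} yields $0<|E_M(x)|<1$.

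For (ii)$\Rightarrow$(i) I first isolate the purely arithmetic claim that every $M\in\RR_E$ with $\det M=\pm1$ factors as $\left(\begin{smallmatrix}0&e_0\\1&a_1\end{smallmatrix}\right)\cdots\left(\begin{smallmatrix}0&e_{n-1}\\1&a_n\end{smallmatrix}\right)$ for an admissible $ECF$ word ($a_j$ even, $a_j\ge2$, $e_j=\pm1$), and prove it by descent on $Q'$. Given $M$, I recover the last factor by choosing the unique even $a$ and sign $e$ with $0\le e(Q'-aQ)<Q$: dividing $Q'=cQ+r$ with $0\le r<Q$, the parity of $c$ forces either $(a,e,Q'')=(c,+1,r)$ or $(c+1,-1,Q-r)$, and since $Q'>Q$ in $\RR_E$ the resulting $a$ is automatically $\ge2$. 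Setting $Q''=e(Q'-aQ)$ and $P''=e(P'-aP)$, the reduced matrix $\tilde M=\left(\begin{smallmatrix}P''&P\\Q''&Q\end{smallmatrix}\right)$ has determinant $\pm1$ (a one-line computation), reduces mod $2$ to $I$ or $J$ (we strip a factor $\equiv J$), and has strictly smaller top-right denominator ($Q<Q'$ as $a\ge2$); the terminal case $Q''=0$ forces $Q=1$, whence $\det M=\pm1$ and the parity constraint give $P=0$, $P'=1$, i.e. the base matrix $\left(\begin{smallmatrix}0&1\\1&a_1\end{smallmatrix}\right)$. Iterating recovers the digit string and exhibits $P/Q,P'/Q'$ as consecutive convergents of the word $(a_1,e_1),\dots,(a_n,e_n)$.

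It then remains to place the given $x$. Solving $s=E_M(x)$ for $x$ gives $x=\frac{P'+sP}{Q'+sQ}$, matching $x=\frac{p_n+p_{n-1}e_nt_n}{q_n+q_{n-1}e_nt_n}$ with $s=e_nt_n$; thus $0<|E_M(x)|<1$ says precisely that $T_D^n(x)\in(0,1)$ and that the $ECF$ word of $x$ begins with $(a_1,e_1),\dots,(a_n,e_n)$, so $P/Q,P'/Q'$ are indeed its successive $ECF$ convergents. The main obstacle is the descent step: checking that the remainder/parity choice of $(a,e)$ extracted from the $Q$-column is simultaneously consistent on the $P$-column (the same $a,e$ give $0\le P''\le Q''$ and preserve the inequalities defining $\RR_E$), and disposing of the small-$Q$ and boundary cases. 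This sign-and-remainder bookkeeping, rather than any single hard estimate, is where the real work lies.
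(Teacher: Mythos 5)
Your overall architecture is the paper's: the (i)$\Rightarrow$(ii) direction is argued identically (each factor $\equiv J \pmod 2$, Lemma \ref{Lemma2}, and \eqref{2.4}--\eqref{2.5}), and your descent for (ii)$\Rightarrow$(i) --- divide $Q'$ by $Q$, round the quotient to the even integer $a$ with sign $e$ so that $0\leqslant e(Q'-aQ)<Q$, strip the factor $\left(\begin{smallmatrix}0&e\\1&a\end{smallmatrix}\right)$, terminate at the base matrix $\left(\begin{smallmatrix}0&1\\1&Q'\end{smallmatrix}\right)$ with $Q'$ even --- is exactly the paper's construction of $M_0$ (your two parity cases are literally its $[\lambda]=2\ell$ and $[\lambda]=2\ell-1$ cases). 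Your side remark that $\det M=\pm1$ must be read into the definition of $\RR$ is correct and worth making.

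There is, however, a genuine gap at the crux. Having factored $M$ into an admissible word, you write that $0<|E_M(x)|<1$ ``says precisely that'' the $ECF$ expansion of $x$ begins with $(a_1,e_1),\dots,(a_n,e_n)$, justifying this by matching $x=\frac{P'+sP}{Q'+sQ}$ against $x=\frac{p_n+p_{n-1}e_nt_n}{q_n+q_{n-1}e_nt_n}$. That identity is \emph{derived from} the assumption that $p_{n-1}/q_{n-1},p_n/q_n$ are the convergents of $x$, which is what you are trying to prove; as written the step is circular. All the condition $0<|E_M(x)|<1$ gives you directly is that $x$ lies in the open interval with endpoints $\frac{P'\pm P}{Q'\pm Q}$; you must still show that this interval is contained in the cylinder set of the extracted word. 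This is precisely where the paper spends most of its effort: it carries $x$ through the descent, proving via the two displayed chains of mediant inequalities that $0<|E_M(x)|<1$ implies $0<|E_{M_0}(x)|<1$, so that by induction the whole digit string is realized by $x$. (An alternative repair consistent with your ``factor first, place $x$ second'' plan is a forward induction stripping the \emph{first} factor, which writes $1/x=a_1+u$ with $u=e_1\frac{\tilde p'+s\tilde p}{\tilde q'+s\tilde q}$ and uses Lemma \ref{Lemma2} applied to the abstract word to get $|u|\in(0,1)$ and $\operatorname{sign}(u)=e_1$; but some such argument is indispensable.) Relatedly, you locate ``the real work'' in checking the $P$-column consistency $0\leqslant P''\leqslant Q''$, which is in fact a one-line consequence of $|P'Q-PQ'|=|PQ_0-P_0Q|=1$ and $P\geqslant1$, exactly as the paper disposes of it; the ordering/nesting of the intervals is the part that actually requires work.
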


\begin{proof}
(i) $\Rightarrow $ (ii) Suppose
$M= \Big( \begin{smallmatrix} P & P^\prime \\ Q & Q^\prime \end{smallmatrix}\Big)
=\Big( \begin{smallmatrix} p_{n-1} & p_n \\ q_{n-1} & q_n \end{smallmatrix}\Big)$
for some $n\geqslant 1$. From Lemma \ref{Lemma2}, $\Big( \begin{smallmatrix}
0 & e_{k-1} \\ 1 & a_k \end{smallmatrix} \Big) \equiv J\pmod{2}$ and equality \eqref{2.3}
we infer that $M\in \RR_E$. The second condition in (ii) follows from \eqref{2.5}.

(ii) $\Rightarrow $ (i)
Consider first the case $Q=1$. Only the matrices $M=\Big( \begin{smallmatrix}
0 & 1 \\ 1 & Q^\prime \end{smallmatrix}\Big)$ and $M=\Big( \begin{smallmatrix}
1 & Q^\prime -1 \\ 1 & Q^\prime \end{smallmatrix}\Big)$ may arise. Since
$M\equiv I$ or $J\pmod{2}$, only the former case can occur and
$Q^\prime$ is necessarily an even positive integer. The corresponding inequality
\[
0< \bigg| \frac{Q^\prime x-1}{-x} \bigg| <1\quad \mbox{\rm is equivalent to}\quad
x\in\bigg( \frac{1}{Q^\prime +1},\frac{1}{Q^\prime}\bigg) \cup
\bigg( \frac{1}{Q^\prime},\frac{1}{Q^\prime -1}\bigg)
\] 
or, according to the definition of $a_1$, to $a_1=Q^\prime$,
showing that $0/1,1/Q^\prime$ are successive convergents of $x$.

When $Q>1$, take ($\ell \geqslant 1$):
\begin{equation*}
\begin{split}
& e_M =1,\ Q_0=Q^\prime -2\ell Q,\ P_0=P^\prime -2\ell P \quad \mbox{\rm \ if \ $[\lambda] =2\ell,$} \\
& e_M =-1,\ Q_0 =2\ell Q-Q^\prime,\ P_0=2\ell P-P^\prime
\ \mbox{\rm \  if \  $[\lambda ]=2\ell-1,$} \\
& M_0 =\left( \begin{matrix} P_0 & P \\ Q_0 & Q \end{matrix}\right).
\end{split}
\end{equation*}

In both cases one has $0<Q_0<Q$, $M=M_0
\left( \begin{smallmatrix} 0 & e_M \\ 1 & 2\ell \end{smallmatrix}\right)$, and so
$M_0 \equiv I$ or $J\pmod{2}$.
Since $Q^\prime >Q>Q_0$, the condition $0< \vert E_M \vert <1$ is equivalent to
$x$ lying between $\frac{P^\prime+P}{Q^\prime +Q}$ and
$\frac{P^\prime -P}{Q^\prime -Q}$, while $0< \vert E_{M_0} \vert <1$ is equivalent to $x$
lying between $\frac{P+P_0}{Q+Q_0}$ and $\frac{P-P_0}{Q-Q_0}$.
When $P/Q<P^\prime/Q^\prime$ the former implies the latter because of
\begin{equation*}
\begin{split}
\frac{P-P_0}{Q-Q_0} & =\frac{(2\ell+1)P-P^\prime}{(2\ell+1)Q-Q^\prime} < \frac{P}{Q} <
\frac{P^\prime+P}{Q^\prime+Q} < \frac{P^\prime}{Q^\prime} <\frac{P^\prime-P}{Q^\prime-Q} \\
 & \leqslant \frac{P+P_0}{Q+Q_0}=\frac{P^\prime-(2\ell-1)P}{Q^\prime-(2\ell-1)Q}
< \frac{P_0}{Q_0} =\frac{P^\prime-2\ell P}{Q^\prime -2\ell Q} \mbox{\rm
\quad when\ } [\lambda ]=2\ell ,
\end{split}
\end{equation*}
and of
\begin{equation*}
\begin{split}
\frac{P_0}{Q_0} & =\frac{2\ell P-P^\prime}{2\ell Q-Q^\prime}
< \frac{P+P_0}{Q+Q_0} =\frac{(2\ell+1)P-P^\prime}{(2\ell+1)Q-Q^\prime} < \frac{P}{Q}
< \frac{P^\prime+P}{Q^\prime+Q} < \frac{P^\prime}{Q^\prime} \\ & < \frac{P^\prime-P}{Q^\prime -Q}
\leqslant \frac{P-P_0}{Q-Q_0} =\frac{P^\prime -(2\ell-1)P}{Q^\prime -(2\ell-1)Q}
\mbox{\rm \quad when\ } [\lambda ]=2\ell-1 .
\end{split}
\end{equation*}
When $P^\prime/Q^\prime <P/Q$, analogous inequalities show that $0<\vert E_M\vert <1$ implies
$0 < \vert E_{M_0} \vert <1$.
Furthermore, the inequalities $0\leqslant P_0 \leqslant P$ follow from
$\vert P^\prime Q-PQ^\prime \vert =\vert PQ_0 -P_0 Q\vert =1$ and $P\geqslant 1$.
\end{proof}

\subsection{Successive convergents for ${\rm OCF}(x)$}

Denominators of successive convergents for ${\rm OCF}(x)$ satisfy (\cite[Eq. 2.10]{Rie})
\begin{equation}\label{3.2}
\begin{split}
r_n & :=  q_n/q_{n-1} \\ & = a_n +e_{n-1}[[(a_{n-1},e_{n-2}),(a_{n-2},e_{n-3}),
\ldots , (a_2,e_1),(a_1,\ast)]] \\ & \geqslant a_n -[[(3,-1),(3,-1),\ldots,(3,-1),(3,\ast)]] \\
& > a_n -[[(3,-1),(3,-1),(3,-1)\ldots]] \\ & = a_n -1+1/G =a_n -2+G.
\end{split}
\end{equation}
In the opposite direction one has
\begin{equation}\label{3.3}
r_n = a_n + \frac{e_{n-1}}{r_{n-1}} < a_n +\frac{e_{n-1}}{a_{n-1}-2+G} \leqslant
a_n +\frac{1}{G-1}=a_n +G .
\end{equation}
In particular \eqref{3.2} and \eqref{3.3} show that if $a_n \geqslant 3$, then $r_n > 1+G$, proving

\begin{lem}\label{Lemma4}
If $r_n \leqslant 2+g$ then $a_n=1$, and
in particular $e_n=1$ and 
\[
0<\frac{q_n x-p_n}{-q_{n-1}x+p_{n-1}} <1.
\]
\end{lem}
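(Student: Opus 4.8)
The plan is to establish the three assertions in turn, each being a direct consequence of facts already recorded. The claim $a_n=1$ is simply the contrapositive of the observation made just before the lemma, namely that $a_n\geqslant 3$ forces $r_n>1+G$. The claim $e_n=1$ then follows from the $OCF$ admissibility condition, and the two-sided bound on $\frac{q_nx-p_n}{-q_{n-1}x+p_{n-1}}$ follows from \eqref{2.4} once the sign $e_n$ is pinned down.

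First I would argue $a_n=1$. Since in the $OCF$ expansion every $a_n$ is a positive odd integer, the only alternative to $a_n=1$ is $a_n\geqslant 3$. But then the chain \eqref{3.2} gives the strict bound $r_n>a_n-2+G\geqslant 1+G$, and using $G=1+g$ this reads $r_n>2+g$, contradicting the hypothesis $r_n\leqslant 2+g$. Hence $a_n=1$.

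Next, the defining constraint on $OCF$ digits is $a_n+e_n\geqslant 2$ with $e_n\in\{\pm 1\}$; substituting $a_n=1$ forces $e_n\geqslant 1$, i.e. $e_n=1$. Finally, by \eqref{2.4} we have $\frac{q_nx-p_n}{-q_{n-1}x+p_{n-1}}=e_nt_n=t_n=T_D^n(x)$, and for $x\in\Omega$ each iterate $t_n$ lies strictly between $0$ and $1$ (the map $T_D$ takes values in $[0,1]$ and sends irrationals to irrationals, so $t_n$ is neither $0$ nor $1$). This yields $0<\frac{q_nx-p_n}{-q_{n-1}x+p_{n-1}}<1$; alternatively, the strict upper bound is already guaranteed by \eqref{2.5}, and only the positivity genuinely requires $e_n=1$.

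There is essentially no serious obstacle here: the entire content is the arithmetic comparison of the first step. The only points demanding care are that the inequality in \eqref{3.2} is strict, so that the threshold $a_n-2+G$ evaluated at $a_n=3$ matches $2+g$ exactly via the identity $2+g=1+G$, and that the admissibility condition $a_n+e_n\geqslant 2$ is invoked at the correct index, so as to conclude $e_n=1$ rather than the sign at a neighbouring index.
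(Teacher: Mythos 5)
Your proof is correct and follows the paper's own (very terse) argument: the paper likewise deduces from the chain \eqref{3.2} that $a_n\geqslant 3$ forces $r_n>1+G=2+g$, and leaves the consequences $e_n=1$ (from $a_n+e_n\geqslant 2$) and $0<e_nt_n<1$ (from \eqref{2.4}--\eqref{2.5}) implicit. You have simply spelled out the same steps in more detail, including the correct observation that only the positivity, not the upper bound, depends on knowing $e_n=1$.
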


\begin{prop}\label{Prop5}
For each $x\in\Omega$ the following are equivalent:
\begin{enumerate}
\item[(i)]
$P/Q,P^\prime/Q^\prime$ are successive convergents in $\operatorname{OCF}(x)$.
\item[(ii)]
$M=\left( \begin{smallmatrix} P & P^\prime \\
Q & Q^\prime \end{smallmatrix}\right) \in\RR_O$ and one of the following two conditions holds:
\begin{enumerate}
\item[($\ast$)] $\ \lambda_M :=Q^\prime /Q > 2+g$ and $0<\vert E_M (x)\vert <1$.
\item[($\ast\ast$)] $\ g < \lambda_M \leqslant 2+g$ and
$0< E_M (x)<1$.
\end{enumerate}
\end{enumerate}
\end{prop}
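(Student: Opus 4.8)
The plan is to follow the template of the proof of Proposition~\ref{Prop3}, the genuinely new ingredient being that $OCF$ denominators need not be monotone, which is precisely what produces the two‑case conclusion $(\ast)$/$(\ast\ast)$. For the implication (i)$\Longrightarrow$(ii), write $M=\Big(\begin{smallmatrix} p_{n-1} & p_n \\ q_{n-1} & q_n\end{smallmatrix}\Big)$; by \eqref{2.3} it is a product of factors $\Big(\begin{smallmatrix} 0 & e_{k-1} \\ 1 & a_k\end{smallmatrix}\Big)\equiv A\pmod 2$, each $a_k$ being odd and $e_{k-1}=\pm1$, so $M\equiv A^n\in\{[I],[A],[B]\}\pmod 2$. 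Together with $\lambda_M=r_n>g$ from \eqref{3.2} and the inequalities $0\leqslant p_{n-1}\leqslant q_{n-1}$, $1\leqslant p_n\leqslant q_n$ valid for $OCF$ convergents, this gives $M\in\RR_O$. By \eqref{2.4}--\eqref{2.5}, $E_M(x)=e_nt_n$ with $t_n=T_{D_O}^n(x)\in(0,1)$, so $0<|E_M(x)|<1$ always; if $\lambda_M>2+g$ this is exactly $(\ast)$, while if $g<\lambda_M\leqslant 2+g$ then Lemma~\ref{Lemma4} forces $a_n=1$ and $e_n=1$, whence $E_M(x)=t_n>0$, which is $(\ast\ast)$.

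For the converse (ii)$\Longrightarrow$(i) I would argue by induction on the positive integer $S(M)=Q+Q'$. The reduction step peels off a last elementary factor: one produces an odd $a\geqslant1$ and a sign $e_M\in\{\pm1\}$ such that, setting $M_0=\Big(\begin{smallmatrix} e_M(P'-aP) & P \\ e_M(Q'-aQ) & Q\end{smallmatrix}\Big)$, one has $M=M_0\Big(\begin{smallmatrix} 0 & e_M \\ 1 & a\end{smallmatrix}\Big)$, $M_0\in\RR_O$, and $M_0$ satisfies the appropriate one of $(\ast)$/$(\ast\ast)$; the inductive hypothesis then makes $\tfrac{P_0}{Q_0},\tfrac{P}{Q}$ successive $OCF$ convergents, and appending the digit recovers $\tfrac{P}{Q},\tfrac{P'}{Q'}$. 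That $S$ strictly decreases is checked case by case. If $1<\lambda_M\leqslant 2+g$ take $a=1$, $e_M=1$, so $Q_0=Q'-Q$, $\lambda_{M_0}=1/(\lambda_M-1)>g$ and $S(M_0)=Q'<S(M)$. If $g<\lambda_M<1$ take $a=1$, $e_M=-1$, so $Q_0=Q-Q'$ and $S(M_0)=2Q-Q'<S(M)$ since $\lambda_M>g>\tfrac12$; here one computes directly $\lambda_{M_0}=1/(1-\lambda_M)>1/g^2=2+g$, so $M_0$ automatically lands in $(\ast)$. Finally, if $\lambda_M>2+g$ then $a\geqslant3$ is forced by Lemma~\ref{Lemma4}, and choosing $e_M$ so that $\lambda_{M_0}>g$ one gets $S(M_0)=Q_0+Q<(1+G)Q<Q'<S(M)$. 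The base case is $Q=1$, handled as in Proposition~\ref{Prop3}: the residue and sign constraints leave only $M=\Big(\begin{smallmatrix} 0 & 1 \\ 1 & a_1\end{smallmatrix}\Big)$ with $a_1$ odd, giving the initial convergents $\tfrac01,\tfrac1{a_1}$.

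The delicate point is that $a$ and $e_M$ cannot be read off from $\lambda_M$ alone: for odd $a$ the admissible ranges $(a-G,a+G)$ overlap on $(3-G,1+G)$, so a value $\lambda_M$ in this window is compatible with both $a=1$ and $a=3$. The ambiguity must be resolved by the position of $x$. Concretely, the hypothesis $0<|E_M(x)|<1$ (respectively $0<E_M(x)<1$) confines $x$ to the interval bounded by the relevant mediants $\tfrac{P'\pm P}{Q'\pm Q}$, and this localization both selects the correct pair $(a,e_M)$ and forces the corresponding inequality $0<|E_{M_0}(x)|<1$ (respectively its signed version) for the reduced matrix. This is the $OCF$ analogue of the two displayed chains of fractions in the proof of Proposition~\ref{Prop3}, now longer and with an ordering that depends on $e_M$ and on whether $\lambda_M\lessgtr1$; Lemma~\ref{Lemma4} is what decides whether $M_0$ satisfies $(\ast)$ or $(\ast\ast)$.

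I expect this interval bookkeeping to be the main obstacle, rather than the monovariant or the mod‑$2$ group computation, both of which are routine. The conceptual novelty it encodes is the non‑monotonicity: in case $(\ast\ast)$ with $\lambda_M<1$, i.e.\ $Q'<Q$, the reduction is a \emph{shrinking} step using $e_M=-1$, $a=1$, and it is only the clean identity $\lambda_{M_0}=1/(1-\lambda_M)>2+g$ that guarantees such a step is always immediately preceded by an expanding one lying in $(\ast)$. Keeping the sign of $E_{M_0}(x)$ consistent through these shrinking steps, while simultaneously pinning down $(a,e_M)$ from the location of $x$, is where the bulk of the verification will concentrate.
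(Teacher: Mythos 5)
Your direction (i)$\Rightarrow$(ii) is correct and matches the paper, and your monovariant $Q+Q'$ is in fact a small improvement: it strictly decreases in every case, whereas the paper's $\min\{Q,Q'\}$ is only non-increasing when $\lambda_M\in[2\ell,2\ell+g)$ and forces a two-step descent there. The gap is in the reduction step, and the ``delicate point'' you isolate is a misdiagnosis. You claim the admissible range of $\lambda_M$ for last digit $a$ is $(a-G,a+G)$, so that consecutive odd digits overlap and $(a,e_M)$ must be selected by the position of $x$. This is false: since $e_{n-1}=-1$ forces $a_{n-1}\geqslant 3$ and hence $r_{n-1}>2+g$, inequality \eqref{3.2} gives the sharper bound $r_n>a_n-g^2$, and because $a+G=(a+2)-g^2$ the intervals $(a-g^2,a+G)$ tile $(g,\infty)$ without overlap. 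The pair $(a,e_M)$ is therefore determined by $\lambda_M$ alone; this is precisely what the paper's partition $(g,\infty)=\SS_1\cup\SS_2\cup\SS_3$ and the definitions of $k_M$ and $e_M$ encode. ``Selection by the location of $x$'' cannot be the mechanism in any case, since the interval cut out by $0<\vert E_M(x)\vert<1$ depends only on $M$, so every $x$ in it must have the same predecessor digit.

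The criterion you actually write down, ``choose $e_M$ so that $\lambda_{M_0}>g$,'' does not determine the choice (for $\lambda_M=7/2$ both $(a,e_M)=(3,+1)$ and $(5,-1)$ give $\lambda_{M_0}>g$), and nowhere do you impose or verify the $OCF$ admissibility condition $e_M+k_{M_0}\geqslant 2$. This condition is not cosmetic: a sign $e_M=-1$ may only follow a digit $\geqslant 3$, so without it ``appending the digit'' to the convergents of $M_0$ need not produce the $OCF$ expansion of $x$ at all, and the final identification of the factorization with $OCF(x)$ fails. The correct rule, which your proof must contain, is that $e_M=-1$ is admissible exactly when it yields $\lambda_{M_0}>2+g$, forcing $k_{M_0}\geqslant 3$; you check this only in the shrinking case $g<\lambda_M<1$. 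Finally, the base case is incomplete: for $Q=1$ the congruence conditions also admit $M\equiv B\pmod 2$, namely $M=\left(\begin{smallmatrix}1&Q'-1\\1&Q'\end{smallmatrix}\right)$ with $Q'\geqslant 4$ even (plus $\left(\begin{smallmatrix}1&1\\1&2\end{smallmatrix}\right)$ and the case $Q'=1\leqslant Q$, which occurs precisely because the denominators are not monotone), and these need the separate analysis the paper gives; your claim that only $\left(\begin{smallmatrix}0&1\\1&a_1\end{smallmatrix}\right)$ survives is wrong.
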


\begin{proof}
(i) $\Rightarrow$ (ii) Suppose that there is $n\geqslant 1$ such that
\begin{equation}\label{3.4}
M=\left( \begin{matrix} 0 & e_0=1 \\ 1 & a_1 \end{matrix}\right)
\left( \begin{matrix} 0 & e_1 \\ 1 & a_2 \end{matrix} \right) \cdots
\left( \begin{matrix} 0 & e_{n-1} \\ 1 & a_n \end{matrix}\right) =
\left( \begin{matrix} p_{n-1} & p_n \\ q_{n-1} & q_n \end{matrix}\right) .
\end{equation}
Since $\left( \begin{smallmatrix} 0 & e_{i-1} \\ 1 & a_i \end{smallmatrix}\right)
\equiv \left( \begin{smallmatrix} 0 & 1 \\ 1 & 1 \end{smallmatrix} \right)
=A \pmod{2}$ and $\{ [I],[A],[B]\}$ forms a subgroup
of $\operatorname{SL}_2 (\Z / 2\Z)$, it follows that $M\equiv I,A,$ or $B\pmod{2}$.
The inequality $GQ^\prime >Q$ follows from \eqref{3.2}, while $0\leqslant P=p_{n-1} \leqslant Q=q_{n-1}$,
$0< P^\prime =p_n\leqslant Q^\prime =q_n$ are well-known (they follow as a result of the
$\operatorname{RCF}\longrightarrow \operatorname{OCF}$ algorithm or can be directly deduced from
$p_{n-1}q_n-p_n q_{n-1}=\pm 1$).
Properties ($\ast$) and ($\ast\ast$) follow from \eqref{2.4}, \eqref{2.5}, and from Lemma \ref{Lemma4}.

(ii) $\Rightarrow$ (i) Consider the partition $( g,\infty) =\SS_1 \cup \SS_2 \cup \SS_3$, where
\begin{equation*}
\begin{split}
\SS_1 & = (g,1) \cup (2+g,3) \cup (4+g,5) \cup \ldots ,\\
\SS_2 & =[1,2) \cup [3,4) \cup [5,6) \cup \ldots , \\
\SS_3 & =[ 2,2+g) \cup [4,4+g) \cup [6,6+g) \cup \ldots
\end{split}
\end{equation*}
For each matrix $M=\left( \begin{smallmatrix} P & P^\prime \\ Q & Q^\prime \end{smallmatrix} \right)
\in \RR_O$ with $\lambda=\lambda_M$, define
\begin{equation*}
k_M =\begin{cases}
2\ell-1 & \mbox{\rm if $\lambda\in\SS_2$, $[\lambda]=2\ell-1$, $\ell\geqslant 1$,} \\
2\ell+1 & \mbox{\rm if $\lambda\in\SS_1$,
$[\lambda ]=2\ell$, $\ell\geqslant 0$, and $\{ \lambda \} >g$,} \\
2\ell-1 & \mbox{\rm if $\lambda\in\SS_3$,
$[\lambda ]=2\ell$, $\ell\geqslant 1$, and $\{ \lambda \} < g.$}
\end{cases}
\end{equation*}
Note that
\begin{equation*}
k_M \geqslant 3 \ \Longleftrightarrow \ \lambda > 2+g =G^2.
\end{equation*}
We prove the following statement:

\begin{lem}\label{Lemma6}
Let $x\in\Omega$ and $M=\left( \begin{smallmatrix} P & P^\prime \\ Q & Q^\prime \end{smallmatrix} \right)
\in \RR_O$  with $\widetilde{Q}=\min\{ Q,Q^\prime\} >1$ and
satisfying {\em ($\ast$)} or {\em ($\ast\ast$)}. There exist $e_M \in \{ \pm 1\}$ and $M_0=\left( \begin{smallmatrix}
P_0 & P \\ Q_0 & Q \end{smallmatrix}\right)\in\RR_O$ such that
\begin{equation}\label{3.5}
M=M_0 \left( \begin{matrix} 0 & e_M \\ 1 & k_M \end{matrix}\right),
\end{equation}
$e_M +k_{M_0}\geqslant 2$, $M_0$ satisfies the corresponding property
{\em ($\ast$)} or {\em ($\ast\ast$)}, and
$\widetilde{Q}_0=\min\{ Q_0,Q\}\leqslant\widetilde{Q}$. Furthermore, if $\lambda=\lambda_M \in \SS_1 \cup \SS_2$,
then we can take $\widetilde{Q}_0 <
\widetilde{Q}$.
\end{lem}

{\sl Proof of Lemma \ref{Lemma6}.}
Consider the following integers:
\begin{equation*}
e_M =\begin{cases}
1 & \mbox{\rm if $\lambda \in\SS_2 \cup \SS_3,$} \\
-1 & \mbox{\rm if $\lambda \in \SS_1.$}
\end{cases}
\end{equation*}
\begin{equation*}
\begin{split}
Q_0 & =\begin{cases}
Q^\prime -(2\ell-1)Q & \mbox{\rm if $\lambda\in\SS_3$, $[\lambda] =2\ell$, $\ell\geqslant 1$, and
$\{ \lambda\} < g,$}\\
Q^\prime -(2\ell-1)Q & \mbox{\rm if $\lambda \in \SS_2$, $[\lambda]=2\ell-1$, $\ell\geqslant 1,$} \\
(2\ell+1)Q-Q^\prime & \mbox{\rm if $\lambda\in\SS_1$, $[\lambda] =2\ell$, $\ell\geqslant 0$,
and $\{ \lambda\} > g.$}
\end{cases} \\
& = \begin{cases}
( 1+\{ \lambda\})Q & \mbox{\rm if $\lambda\in\SS_3,$} \\
 \{ \lambda\} Q & \mbox{\rm if $\lambda \in \SS_2,$} \\
( 1-\{ \lambda \} ) Q
& \mbox{\rm if $\lambda \in \SS_1.$}
\end{cases}
\end{split}
\end{equation*}
\begin{equation*}
P_0 =\begin{cases}
P^\prime -(2\ell-1)P & \mbox{\rm if $\lambda\in\SS_3$, $[\lambda] =2\ell$, $\ell\geqslant 1$, and
$\{ \lambda\} < g,$} \\
P^\prime -(2\ell-1)P & \mbox{\rm if $\lambda\in\SS_2$, $[\lambda]=2\ell-1$, $\ell\geqslant 1,$} \\
(2\ell+1)P-P^\prime & \mbox{\rm if $\lambda\in\SS_1$, $[\lambda] =2\ell$, $\ell\geqslant 0$,
and $\{ \lambda\} > g.$}
\end{cases}
\end{equation*}

Equality \eqref{3.5} holds in all cases with this choice for $Q_0$ and $P_0$.
One plainly checks that
\begin{equation*}
\lambda_0:=\frac{Q}{Q_0} \in \begin{cases}
(2+g,\infty) & \mbox{\rm if $\lambda \in\SS_1,$} \\
(1,\infty) & \mbox{\rm if $\lambda \in \SS_2 ,$} \\
(g,1] & \mbox{\rm if $\lambda\in\SS_3.$}
\end{cases}
\end{equation*}
In particular this shows that $\lambda_0 > g$. The inequality
$e_M +k_{M_0}\geqslant 2$ is trivial when $\lambda\in\SS_2\cup\SS_3$. When $\lambda\in\SS_1$
we have $\lambda_0 > 2+g$, hence $k_{M_0}\geqslant 3$ and
$e_M +k_{M_0}\geqslant 2$.

Clearly
$\left( \begin{smallmatrix} 0 & e_M \\ 1 & k_M \end{smallmatrix}\right)
\equiv A\pmod{2}$. The inequalities $0\leqslant P_0 \leqslant Q_0$ follow immediately
from $P_0 Q-PQ_0 =\pm 1$ and $P<Q$, the latter one being a consequence of
the assumption $\widetilde{Q} >1$.
The fact that $M_0$ satisfies either ($\ast$) or ($\ast\ast$) follows from Lemma \ref{Lemma7}.
$\square$

Back to the proof of Proposition \ref{Prop5}, note that when
$\lambda \in ( g,1]$ one has $0< Q_0 =Q-Q^\prime < Q^\prime <Q$
(the first inequality holds because $G<2$), while for $\lambda \in (\SS_1 \cup \SS_2)
\setminus ( g,1)$ it is plain that $0< Q_0 <Q<Q^\prime$. Hence
whenever $\lambda\in \SS_1 \cup \SS_2$ one has $\min\{ Q_0,Q\} < \min\{ Q,Q^\prime\}$.

When $\lambda\in\SS_3$ one only has $\min\{ Q_0,Q\}=\min\{ Q,Q^\prime\}$
(actually $Q<Q_0 <Q^\prime$). However, in this case $e_M =-1$ so $k_{M_0}\geqslant 3$,
and $\lambda_{M_0}=Q/Q_0 \in ( g,1)$. Thus one can apply
the same procedure to $M_0$ and find $M_{-1}=\left( \begin{smallmatrix} P_{-1} & P_0 \\
Q_{-1} & Q_0 \end{smallmatrix} \right)\in \RR_0$ that satisfies ($\ast$) or ($\ast\ast$),
and such that $M_0=M_{-1} \left(
\begin{smallmatrix} 0 & e_{M_0} \\ 1 & k_{M_0} \end{smallmatrix}\right)$,
$e_{M_0}+k_{M_{-1}} \geqslant 2$, and $\widetilde{Q}_{-1} :=\min
\{ Q_{-1},Q_0\} < \widetilde{Q}_0 =\widetilde{Q}$ (this inequality is strict because
$\lambda_0 \in (g,1) \subseteq \SS_1$).

We next discuss the case $\widetilde{Q}=1$. When $Q^\prime =1 \leqslant Q$, the inequality
$Q^\prime /Q=1/Q \geqslant g$ yields $Q=1$. Hence
$M=\left( \begin{smallmatrix} 0 & 1 \\ 1 & 1 \end{smallmatrix} \right)$,
with $0/1,1/1$ successive convergents of every $x\in (0,1)$
that satisfies $0< \frac{x-1}{-x} <1$, i.e. of every $x\in ( 1/2,1)$.
Suppose now $Q=1<Q^\prime$. When $1/G<Q^\prime /Q=Q^\prime < 2+g$,
one has $Q^\prime =2$ and only the matrices
$M=\left( \begin{smallmatrix} 0 & 1 \\ 1 & 2 \end{smallmatrix}\right)$
and $M=\left( \begin{smallmatrix} 1 & 1 \\ 1 & 2 \end{smallmatrix}\right)$ may
arise. But the former matrix is not admissible being $\equiv \left( \begin{smallmatrix}
0 & 1 \\ 1 & 0 \end{smallmatrix} \right)\pmod{2}$, while the latter matrix corresponds to
$0< \frac{2x-1}{-x+1} <1$, hence $x\in ( 1/2,2/3)$,
$e_1 =1$ and $a_1 =[ 1/x] =1$, and indeed $1/1,1/2$
are successive convergents in ${\rm OCF}(x)$ for every $x\in ( 1/2,2/3)$.
When $2+g < Q^\prime/Q=Q^\prime$ the only matrices that may arise
are $M=\left( \begin{smallmatrix} 0 & 1 \\ 1 & Q^\prime \end{smallmatrix}\right)$
with $Q^\prime \geqslant 3$ odd, and respectively $M=\left( \begin{smallmatrix} 1 & Q^\prime -1 \\
1 & Q^\prime \end{smallmatrix}\right)$ with $Q^\prime \geqslant 4$ even.
The inequality for the former is 
\[
0<\bigg| \frac{Q^\prime x-1}{-x}\bigg| <1,\quad
\mbox{\rm which gives}\quad x\in \bigg( \frac{1}{Q^\prime +1},\frac{1}{Q^\prime}\bigg) \cup
\bigg( \frac{1}{Q^\prime},\frac{1}{Q^\prime -1}\bigg)
\] 
with $Q^\prime$ odd, so that
$a_1 =Q^\prime$ (and $e_1 =1$ respectively $e_1=-1$). The inequality for the
latter is 
\[
0< \bigg| \frac{Q^\prime x-Q^\prime+1}{-x+1}\bigg| < 1,\quad \mbox{\rm  giving}\quad
\frac{Q^\prime}{Q^\prime +1} > x > \frac{Q^\prime -2}{Q^\prime -1} \geqslant \frac{2}{3},
\]
so $e_1=1$, $a_1 =1$. Furthermore one has 
\[
1/Q^\prime < 1/x-1 =T_D (x) <1/(Q^\prime -2)
\] 
with $Q^\prime -1\geqslant 3$ odd integer, so $a_2 =Q^\prime -1$
and $M=\left( \begin{smallmatrix} 0 & 1 \\ 1 & 1 \end{smallmatrix}\right)
\left( \begin{smallmatrix} 0 & 1 \\ 1 & Q^\prime -1 \end{smallmatrix} \right)$, showing that indeed
$1/1,(Q^\prime -1)/Q^\prime$ are successive convergents in ${\rm OCF}(x)$ for every $x$
with $\frac{Q^\prime}{Q^\prime +1} > x > \frac{Q^\prime -2}{Q^\prime -1}$ and $Q^\prime \geqslant 4$ even.

This inductive process on $\widetilde{Q}$ now implies that
 \eqref{3.4} holds for some $e_1,\ldots,$ $e_{n-1}\in \{\pm 1\}$ and
$a_1,\ldots,a_n$ odd positive integers with $e_i+a_i \geqslant 2$, $\forall i
\in \{ 1,\ldots,n-1\}$.
Conditions ($\ast$) and ($\ast\ast )$ show that $x$ lies between $\frac{p_{n}-p_{n-1}}{q_{n}-q_{n-1}}$
and $\frac{p_{n}+p_{n-1}}{q_{n}+q_{n-1}}$
when $q_{n} > q_{n-1}$, and between $\frac{p_{n}}{q_{n}}$ and
$\frac{p_{n}+p_{n-1}}{q_{n}+q_{n-1}}$ when $q_{n}<q_{n-1}$. So $x$ is of the form
$[[(a_1,e_1),(a_2,e_2),\ldots, (a_{n-1},e_{n-1}),(a_n+t,\ast)]]$ for some
$t\in (-1,1)$ when $q_{n}>q_{n-1}$, and respectively $t\in (0,1)$ when
$q_{n}<q_{n-1}$. Therefore $p_{n-1}/q_{n-1}=P/Q$,
$p_n/q_n=P^\prime /Q^\prime$ are successive convergents of $x$.
\end{proof}

\begin{lem}\label{Lemma7}
With the definitions from the proof of implication {\em (ii) $\Rightarrow$ (i)} in
Proposition {\em \ref{Prop5}}, one has:
\begin{enumerate}
\item[(i)]
If $g<\lambda <1$, then $0<E_M(x) <1
\Rightarrow \vert E_{M_0} (x)\vert  <1$.
\item[(ii)]
If $1\leqslant \lambda <2+g$, then $0< E_M (x) <1
\Rightarrow 0< E_{M_0} (x) <1$.
\item[(iii)]
If $2\ell+g <\lambda < 2\ell+1$, $\ell \geqslant 1$, then
$\vert E_M (x)\vert  <1 \Rightarrow -1 < E_{M_0} (x) <0$.
\item[(iv)]
If $2\ell -1 \leqslant \lambda < 2\ell+g$, $\ell \geqslant 2$, then
$\vert E_M (x)\vert <1 \Rightarrow 0< E_{M_0} (x) <1$.
\end{enumerate}
\end{lem}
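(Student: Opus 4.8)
The plan is to reduce all four implications to a single Möbius transformation law relating $E_{M_0}$ to $E_M$, and then to read off each conclusion by elementary interval arithmetic after substituting the explicit values of $e_M$ and $k_M$ recorded in the proof of Proposition \ref{Prop5}.

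First I would establish the transformation law. From the factorization \eqref{3.5}, comparing right-hand columns gives $P^\prime =e_M P_0 +k_M P$ and $Q^\prime =e_M Q_0 +k_M Q$. Writing $a=Qx-P$ and $b=Q_0 x-P_0$ and substituting into \eqref{3.1}, the numerator of $E_M(x)$ becomes $e_M b+k_M a$ while its denominator is $-a$, so that $E_M(x)=-k_M-e_M b/a$. Since $E_{M_0}(x)=a/(-b)=-a/b$, we have $b/a=-1/E_{M_0}(x)$, and hence
\begin{equation*}
E_{M_0}(x)=\frac{e_M}{k_M +E_M(x)},
\end{equation*}
using $e_M^2=1$. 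This single identity drives the whole proof; note that $t\mapsto e_M/(k_M +t)$ is monotone on any interval avoiding the pole $t=-k_M$.

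Next I would unwind $e_M$ and $k_M$ in each of the four $\lambda$-ranges. In (i), $g<\lambda<1$ lies in $\SS_1$ with $\ell=0$ and $\{\lambda\}>g$, so $e_M=-1$, $k_M=1$, and $E_{M_0}(x)=-1/(1+E_M(x))$; the hypothesis $0<E_M(x)<1$ (from ($\ast\ast$)) gives $1+E_M(x)\in(1,2)$, whence $E_{M_0}(x)\in(-1,-\tfrac12)$, so $|E_{M_0}(x)|<1$. In (ii), both $[1,2)\subset\SS_2$ and $[2,2+g)\subset\SS_3$ yield $e_M=1$, $k_M=1$, so $E_{M_0}(x)=1/(1+E_M(x))\in(\tfrac12,1)$. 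In (iii), $(2\ell+g,2\ell+1)\subset\SS_1$ gives $e_M=-1$, $k_M=2\ell+1$, so with $|E_M(x)|<1$ (from ($\ast$)) one has $(2\ell+1)+E_M(x)\in(2\ell,2\ell+2)$ and therefore $E_{M_0}(x)\in(-1/(2\ell),-1/(2\ell+2))\subseteq(-1,0)$, the inclusion using $\ell\geqslant1$. In (iv), both $[2\ell-1,2\ell)\subset\SS_2$ and $[2\ell,2\ell+g)\subset\SS_3$ give $e_M=1$, $k_M=2\ell-1$, so $(2\ell-1)+E_M(x)\in(2\ell-2,2\ell)$ and $E_{M_0}(x)\in(1/(2\ell),1/(2\ell-2))\subseteq(0,1)$, where $\ell\geqslant2$ is exactly what keeps $2\ell-2>0$ and the right endpoint below $1$.

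The computations above are all one-line monotonicity bounds, so there is no genuinely hard estimate; the only points requiring care are bookkeeping ones. First, one must correctly transcribe $e_M$ and $k_M$ from the three-way case split of Proposition \ref{Prop5} and confirm that the quoted $\lambda$-ranges decompose into the asserted pieces of $\SS_1,\SS_2,\SS_3$. Second, one must track the sign reversals introduced by $e_M=-1$ in (i) and (iii), and verify in each case that the relevant interval avoids the pole $t=-k_M$ so that the map is monotone there. I expect the main obstacle to be purely organizational: confirming that the constraints $\ell\geqslant1$ in (iii) and $\ell\geqslant2$ in (iv) are precisely the conditions forcing the image to land in $(-1,0)$ and $(0,1)$ respectively.
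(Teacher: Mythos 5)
Your proof is correct, and it takes a genuinely different route from the paper's. The paper never writes down the M\"obius relation $E_{M_0}(x)=e_M/(k_M+E_M(x))$; instead it translates each condition $0<E_M(x)<1$ (or $|E_M(x)|<1$) into the statement that $x$ lies between two explicit fractions such as $\frac{P^\prime}{Q^\prime}$ and $\frac{P^\prime+P}{Q^\prime+Q}$, does the same for $E_{M_0}$ using $\frac{P}{Q}$, $\frac{P+P_0}{Q+Q_0}$, $\frac{P-P_0}{Q-Q_0}$, and then verifies in each of the four cases a chain of mediant-type inequalities (split according to whether $\frac{P}{Q}<\frac{P^\prime}{Q^\prime}$ or the reverse) showing that the first interval is contained in the second. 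Your approach replaces all of that with the single algebraic identity extracted from the factorization \eqref{3.5}, after which each case is one line of interval arithmetic. What you gain is uniformity (no case split on the relative order of $\frac{P}{Q}$ and $\frac{P^\prime}{Q^\prime}$) and sharper conclusions (e.g.\ $E_{M_0}(x)\in(-1,-\tfrac12)$ in case (i), not merely $|E_{M_0}(x)|<1$); what the paper's version makes more visible is the underlying geometry of nested Farey/mediant intervals, which is the same mechanism used elsewhere in the proof of Proposition \ref{Prop3}. Your bookkeeping of $e_M$ and $k_M$ against the three-way partition $\SS_1,\SS_2,\SS_3$ checks out in all four ranges, and the pole $t=-k_M$ is indeed avoided in each case, so the monotonicity step is sound.
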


\begin{proof}
In all cases $0<E_M(x)=\frac{Q^\prime x-P^\prime}{-Qx+P} <1$ is equivalent with $x$ lying between
$\frac{P^\prime}{Q^\prime}$ and $\frac{P^\prime+P}{Q^\prime+Q}$, while
$0<E_{M_0} (x)=\frac{Qx-P}{-Q_0 x+P_0} <1$ is equivalent to $x$ lying between $\frac{P}{Q}$ and $\frac{P+P_0}{Q+Q_0}$.

(i) In this case
$Q_0=Q-Q^\prime < Q$ and so $-1< E_{M_0}(x) <1$ is equivalent to $x$ lying
between $\frac{P+P_0}{Q+Q_0}=\frac{2P-P^\prime}{2Q-Q^\prime}$ and
$\frac{P-P_0}{Q-Q_0}=\frac{P^\prime}{Q^\prime}$. The conclusion follows because
\begin{equation*}
\begin{split}
& \frac{2P-P^\prime}{2Q-Q^\prime} < \frac{P}{Q} < \frac{P^\prime+P}{Q^\prime+Q}
< \frac{P^\prime}{Q^\prime}\quad \mbox{\rm when} \ \frac{P}{Q}<\frac{P^\prime}{Q^\prime}, \quad \mbox{\rm and} \\
& \frac{P^\prime}{Q^\prime} < \frac{P^\prime+P}{Q^\prime+Q} < \frac{P}{Q}
< \frac{2P-P^\prime}{2Q-Q^\prime} \quad \mbox{\rm when} \  \frac{P^\prime}{Q^\prime} < \frac{P}{Q}.
\end{split}
\end{equation*}

(ii)  In this case $\frac{P+P_0}{Q+Q_0}=\frac{P^\prime}{Q^\prime}$ and $x$ between
$\frac{P^\prime}{Q^\prime}$ and $\frac{P^\prime+P}{Q^\prime +Q}$ implies $x$ between
$\frac{P}{Q}$ and $\frac{P^\prime}{Q^\prime}$.

(iii) In this case $0<Q_0=(2\ell+1)Q-Q^\prime <Q< Q^\prime$, and $-1<E_M (x)<1$ is equivalent to
$x$ lying between $\frac{P^\prime+P}{Q^\prime+Q}$ and $\frac{P^\prime-P}{Q^\prime-Q}$, while
$-1<E_{M_0} (x) <0$ is equivalent to $x$ lying between $\frac{P}{Q}$ and
$\frac{P-P_0}{Q-Q_0}=\frac{P^\prime -2\ell P}{Q^\prime-2\ell Q}$. The implication follows because either 
\[
\frac{P}{Q} < \frac{P^\prime+P}{Q^\prime+Q} < \frac{P^\prime}{Q^\prime} <
\frac{P^\prime -P}{Q^\prime -Q} < \frac{P^\prime -2\ell P}{Q^\prime -2\ell Q}
\]
or
\[
\frac{P^\prime -2\ell P}{Q^\prime -2\ell Q} < \frac{P^\prime -P}{Q^\prime -Q} < \frac{P^\prime}{Q^\prime}
< \frac{P^\prime+P}{Q^\prime +Q} < \frac{P}{Q}.
\]

(iv) In this case $Q^\prime >Q$ and $\frac{P+P_0}{Q+Q_0}=\frac{P^\prime -(2\ell-2)P}{Q^\prime -(2\ell-2)Q}$.
The implication follows because $-1<E_M (x)<1$ is equivalent with $x$ lying between
$\frac{P^\prime+P}{Q^\prime+Q}$ and $\frac{P^\prime-P}{Q^\prime-Q}$,
$0<E_{M_0} (x)<1$ is equivalent with $x$ lying between $\frac{P}{Q}$ and
$\frac{P+P_0}{Q+Q_0}$, and either 
\[
\frac{P}{Q}< \frac{P^\prime+P}{Q^\prime+Q}
<\frac{P^\prime}{Q^\prime} < \frac{P^\prime-P}{Q^\prime-Q} <
\frac{P^\prime -(2\ell-2)P}{Q^\prime -(2\ell-2)Q}
\]
or
\[ 
\frac{P^\prime -(2\ell-2)P}{Q^\prime -(2\ell -2)Q} < \frac{P^\prime -P}{Q^\prime -Q}
< \frac{P^\prime}{Q^\prime} < \frac{P^\prime+P}{Q^\prime+Q} <\frac{P}{Q}.
\]
\end{proof}

The following statement will also be useful:

\begin{lem}\label{Lemma8}
Denominators of successive convergents in OCF satisfy
\begin{enumerate}
\item[(i)]
$q_{n+2}>q_n$.
\item[(ii)]
$q_{n+3} >q_n$.
\item[(iii)]
$q_{n+2} > \min\{ q_n,q_{n+1}\}$.
\end{enumerate}
\end{lem}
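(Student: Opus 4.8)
The plan is to argue throughout with the ratios $r_m = q_m/q_{m-1}$ of consecutive $OCF$ denominators, exploiting two consequences of \eqref{3.2} together with the sign constraint $a_m + e_m \geq 2$ of the $OCF$ algorithm. From \eqref{3.2} one has $r_m > a_m - 2 + G$; since every $a_m$ is a positive odd integer, this yields $r_m > g$ for all $m$, and moreover $r_m > 3 - 2 + G = 2 + g$ whenever $a_m \geq 3$. The sign constraint feeds in through the recurrence $q_m = a_m q_{m-1} + e_{m-1}q_{m-2}$ from \eqref{2.3}: the coefficient of $q_{m-2}$ is $e_{m-1}$, and because $a_{m-1} + e_{m-1} \geq 2$, the value $e_{m-1} = -1$ forces $a_{m-1} \geq 3$. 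The identities $g^2 = 1 - g$ and $G = 1 + g$ then give $g(2+g) = 1+g$ and $(1-g)(2+g) = 1$, which are precisely the products of lower bounds that will appear.

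I would prove (i) first, then deduce (iii), and finally (ii). For (i), write $q_{n+2} = a_{n+2}q_{n+1} + e_{n+1}q_n$ and split on $e_{n+1}$. If $e_{n+1} = 1$ the term $a_{n+2}q_{n+1}$ is positive, so $q_{n+2} > q_n$ at once. If $e_{n+1} = -1$ the constraint forces $a_{n+1} \geq 3$, hence $r_{n+1} > 2+g$; together with $r_{n+2} > g$ this gives $q_{n+2}/q_n = r_{n+2}r_{n+1} > g(2+g) = 1 + g > 1$. For (iii), the same split works: if $e_{n+1} = 1$ then $q_{n+2} = a_{n+2}q_{n+1} + q_n$ exceeds both $q_{n+1}$ and $q_n$, hence their minimum; if $e_{n+1} = -1$ then $a_{n+1} \geq 3$ gives $r_{n+1} > 1$, so $q_{n+1} > q_n$, whence $\min\{q_n, q_{n+1}\} = q_n < q_{n+2}$ by (i).

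For (ii), I would split on $e_{n+2}$ in $q_{n+3} = a_{n+3}q_{n+2} + e_{n+2}q_{n+1}$. If $e_{n+2} = 1$ then $q_{n+3} \geq q_{n+2} + q_{n+1} > q_{n+2} > q_n$, the last step by (i). If $e_{n+2} = -1$ then $a_{n+2} \geq 3$, so $r_{n+2} > 2+g$ while $r_{n+1}, r_{n+3} > g$, and telescoping gives $q_{n+3}/q_n = r_{n+1}r_{n+2}r_{n+3} > g(2+g)g = 1$. The delicate point, and the only place the argument is tight, is precisely this last case: a shrinking step (a sign $-1$) occurs exactly when the neighbouring partial quotient is forced up to at least $3$, and the identity $(1-g)(2+g) = 1$ shows that this forced growth cancels the two worst-case shrinking factors exactly. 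It is therefore essential that the inequalities coming from \eqref{3.2} are strict, so that the product strictly exceeds $1$ and the conclusion $q_{n+3} > q_n$ is genuine rather than an equality.
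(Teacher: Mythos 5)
Your proof is correct, and it reaches the same quantitative core as the paper's --- everything comes down to the bounds $r_m>g$ (always) and $r_m>2+g$ (when $a_m\geqslant 3$), together with the identities $g(2+g)=G>1$ and $g^2(2+g)=1$ --- but the route to those bounds is organized differently. The paper cases on which interval of the partition $\SS_1\cup\SS_2\cup\SS_3$ the last ratio $q_{n+2}/q_{n+1}$ (resp.\ $q_{n+3}/q_{n+2}$) falls into, and imports the implications ``small ratio forces the previous ratio to exceed $2+g$'' from the proof of Proposition \ref{Prop5} (Lemma \ref{Lemma6}); this produces three cases for (i) and four for (ii). You instead case on the sign $e_{n+1}$ (resp.\ $e_{n+2}$) in the recurrence and derive the forced growth directly from the admissibility constraint $a_k+e_k\geqslant 2$ together with \eqref{3.2}: a sign $-1$ at index $k$ pushes $a_k$ up to at least $3$ and hence $r_k>2+g$. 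This is more self-contained (it does not lean on Proposition \ref{Prop5}) and collapses each part to two cases, at the small cost of having to invoke \eqref{3.2} twice per case rather than reading the interval implications off the earlier lemma. Your remark that the strictness of \eqref{3.2} is what saves the borderline product $g^2(2+g)=1$ in part (ii) is exactly the tight point in the paper's argument as well. One housekeeping note: the recurrence you quote, $q_m=a_mq_{m-1}+e_{m-1}q_{m-2}$, is the correct one (it matches the matrix identity \eqref{2.3} and the relation $r_m=a_m+e_{m-1}/r_{m-1}$ in \eqref{3.3}), even though the displayed formula \eqref{2.2} carries a typo ($e_{n-2}$ for $e_{n-1}$); you were right not to follow the typo.
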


\begin{proof}
By Proposition \ref{Prop5} and its proof
$q_{n+2}/q_{n+1} >2 \Rightarrow q_{n+2}/q_n > 2g >1$,
$q_{n+2}/q_{n+1} \in (1,2) \Rightarrow q_{n+1}/q_n >1
\Rightarrow q_{n+2}/q_n >1$, and $q_{n+2}/q_{n+1} \in
( g,1) \Rightarrow q_{n+1}/q_n > 2+g
\Rightarrow q_{n+2}/q_n > g ( 2+g) >1$.
Thus in all possible cases $q_{n+2}>q_n$, which establishes (i).

(ii) follows from
$q_{n+3} / q_{n+2} \in ( g,1) \Rightarrow
q_{n+2}/q_{n+1} > 2+g \Rightarrow
q_{n+3}/q_n > ( 2+g) g^2 =1$,
$q_{n+3}/q_{n+2} =\lambda \in (1,2) \Rightarrow
q_{n+2}/q_{n+1} =1/(\lambda -1) \Rightarrow
q_{n+3}/q_n > \lambda g/(\lambda -1) >2g >1$, $q_{n+3}/q_{n+2} \in ( 2,2+g)
\Rightarrow q_{n+2}/q_{n+1} \in ( g,1) \Rightarrow
q_{n+1}/q_n > 2+g \Rightarrow q_{n+3}/q_n > 2g ( 2+g) >1$, and
$q_{n+3}/q_n > 2+g \Rightarrow q_{n+3}/ q_n > ( 2+g) g^2=1$.

To prove (iii) suppose that $q_{n+2} \leqslant q_{n+1}$. Then
$q_{n+2} / q_{n+1} \in ( g,1)$, which gives in turn
$q_{n+1} / q_n > 2+g$, and therefore $q_{n+2} / q_n > g ( 2+g) >1$.
\end{proof}

\begin{xrem}
Proposition \ref{Prop3} was originally proved, using a different method, by Kraaikamp
and Lopes \cite{KL}, but Proposition \ref{Prop5} is, to the best of our research, new.  Our proofs have
an additional benefit of implying how to derive $a_n$ and $e_{n-1}$ (and hence
$q_{n-2}$) if only $q_{n-1}$ and $q_n$ are known.

Our investigations yielded yet another method of proof, significantly longer but
more direct, which we sketch here.  Examples 1.8 in \cite{Mas} explain how to
algorithmically generate the OCF expansion of $x$ from the RCF expansion of $x$
using \emph{insertion},
\begin{align*}\text{replacing
}&[[\ldots,(a_n,1),(a_{n+1},e_{n+1}),\ldots]]\\ \text{ with }
&[[\ldots,(a_n+1,-1),(1,1),(a_{n+1}-1,e_{n+2}),\ldots]],\end{align*} 
and \emph{singularization}, 
\begin{align*}\text{replacing
}&[[\ldots,(a_n,e_n),(1,1),(a_{n+2},e_{n+2}),\ldots]]\\ \text{ with }
&[[\ldots,(a_n+e_n,-e_n),(a_{n+2}+1,e_{n+2}),\ldots]].
\end{align*}
Both of these operations alter the sequence of convergents: insertion adds a new convergent, while
singularization deletes one.  Nevertheless, it can be shown that if
$P/Q,P'/Q'$ are successive RCF convergents to some $x$, then either
$P/Q,P'/Q'$ are successive OCF convergents to $x$, or
$(Q-P)/Q,(Q'-P')/Q'$ are successive OCF convergents
to $1-x$. (Only one of these pairs forms a matrix that is congruent to $I$, $A$, or
$B$ modulo $2$.) By carefully following how insertion and singularization change the
last $e_{n-1}$ and $a_n$ in the RCF expansion of $P'/Q'$ into the last
$e_{m-1}$ and $a_m$ of the OCF expansion of $P'/Q'$, we can determine
exactly what $e(M)$ and $a(M)$ must be and hence how to derive $P_0$ and $Q_0$.
A similar proof works for the ECF case as well.
\end{xrem}

\section{Estimating the limiting joint distribution for ECF and OCF}
For each $M=\left( \begin{smallmatrix} P & P^\prime \\ Q & Q^\prime \end{smallmatrix}\right)\in\RR$ and
$\xi\in (0,1]$ denote by $I_\xi^+(M)$ (respectively, $I_\xi^-(M)$) the set of
solutions $x$ of $0\leqslant E_M(x)\leqslant\xi$ (respectively, of $-\xi \leqslant E_M(x)\leqslant 0$). The Lebesgue measure of
$I_\xi^\pm (M)$ is
\begin{equation*}
f_\xi^\pm (Q,Q^\prime) =\left| \frac{P^\prime \pm \xi P}{Q^\prime \pm \xi Q} -\frac{P^\prime}{Q^\prime}\right|
=\frac{\xi}{Q^\prime (Q^\prime \pm \xi Q)} .
\end{equation*}

The integral
\begin{equation*}
\begin{split}
F_\pm  =F_\pm (x_1,x_2,x_3,x_4) := & \int_{R/x_2}^\infty dv
\int_0^{\min\{ x_3 v,x_1 R\}} du\, f^\pm_{x_4}(u,v) \\
= & \pm\int_{R/x_2}^\infty \frac{dv}{v}\, \log
\bigg| \frac{v\pm x_4 \min\{ x_3 v,x_1 R\}}{v} \bigg| \\ = &
\pm \int_{x_3/x_2}^\infty \frac{dw}{w} \, \log \bigg| \frac{w\pm x_3 x_4 \min\{ w,x_1\}}{w}\bigg|
\end{split}
\end{equation*}
can be expressed when $x_3\geqslant x_1x_2$ as
\begin{equation*}
F_\pm  = \pm \int_0^{x_1x_2x_4} \frac{dt}{t}\, \log (1\pm t)  = \mp\Li_2 (\mp x_1x_2x_4) ,
\end{equation*}
and when $x_3 < x_1 x_2$ as
\begin{equation*}
\begin{split}
F_\pm & = \int_{x_3/x_2}^{x_1} \frac{dw}{w}\, \log (1\pm x_3x_4) \pm
\int_{x_1}^\infty \frac{dw}{w}\, \log \frac{w\pm x_1x_3x_4}{w} \\
& = \pm\log (1\pm x_3x_4)\log\frac{x_1x_2}{x_3} \mp \Li_2 (\mp x_3 x_4),
\end{split}
\end{equation*}
so $F_\pm$ is as in \eqref{1.6}.

\subsection{The ECF case}
By Lemma \ref{Lemma2} and Proposition \ref{Prop3}, for each $R>1$ and $x\in\Omega$ there is a unique $M=
\left( \begin{smallmatrix} P & P^\prime \\ Q & Q^\prime \end{smallmatrix}\right)\in\RR_E$
with $Q\leqslant R<Q^\prime$ and $\vert E_M (x) \vert <1$.
Given $x_1,x_2,x_3,x_4\in (0,1)$ consider $\NN^{E,\pm}_{x_1,x_2,x_3,x_4} (x,R)$, the number
of matrices $M\in \RR_E$ that satisfy \eqref{1.1} and \eqref{1.2}. One has
\begin{equation*}
\LL^{E,\pm}(R) =\LL^{E,\pm}_{x_1,x_2,x_3,x_4} (R)=\int_0^1 \NN^{E,\pm}_{x_1,x_2,x_3,x_4} (x,R)\, dx .
\end{equation*}

For $\Gamma \in \{ I,J,A,B\}$ we shall estimate
\begin{equation*}
\LL_\Gamma^\pm (R):= \sum\limits_{\substack{M=\left( \begin{smallmatrix}
P & P^\prime \\ Q & Q^\prime \end{smallmatrix}\right) \in \RR_E \\
Q^\prime \geqslant R/x_2 \\ Q \leqslant \min\{ x_3 Q^\prime,x_1 R\} \\
M\equiv \Gamma \hspace{-6pt}\pmod{2}}} f^\pm_{x_4}(Q,Q^\prime).
\end{equation*}
This can be done by M\" obius summation, as in the following standard lemmas (for Lemma \ref{Mob2} see, e.g.,
\cite[Lemma 2.1]{BG}).

\begin{lem}\label{Mob1}
For every interval $J$, every function $g\in C^1 (J)$ of total variation $T_J g$, and every integer $x$,
with $\sigma_0$ the divisor counting function,
\begin{equation*}
\sum\limits_{\substack{a\in J,\, b\in [1,q] \\ ab \equiv x \hspace{-6pt} \pmod{q} \\
(a,q)=1}} g(a) = \sum\limits_{\substack{a\in J \\ (a,q)=1}} g(a)=\frac{\varphi(q)}{q} \int_J g(u)\, du +O\big( \sigma_0 (q)
(\| g\|_\infty +T_J g)\big) .
\end{equation*}
\end{lem}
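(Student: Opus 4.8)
The plan is to establish the two equalities separately. The first is pure bookkeeping: whenever $(a,q)=1$ the residue $a$ is invertible modulo $q$, so there is exactly one $b\in[1,q]$ with $ab\equiv x\pmod q$. Thus the inner sum over $b$ contributes a single nonzero term for each admissible $a$, and the double sum collapses to $\sum_{a\in J,\,(a,q)=1} g(a)$ with no estimation required.

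For the second equality I would detect the coprimality constraint by M\"obius inversion, writing $\mathbf 1_{(a,q)=1}=\sum_{d\mid(a,q)}\mu(d)$ and interchanging the order of summation to obtain
\[
\sum_{\substack{a\in J\\(a,q)=1}} g(a)=\sum_{d\mid q}\mu(d)\sum_{\substack{a\in J\\ d\mid a}} g(a).
\]
This reduces the problem to approximating, for each divisor $d$ of $q$, the sum of $g$ over the multiples of $d$ lying in $J$ by the corresponding integral.

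The core analytic step is a Riemann-sum (Euler--Maclaurin) estimate. Writing $a=dm$ and $h(t)=g(dt)$, the admissible points are $m$ in the reparametrised interval, equally spaced with unit gap, so one expects $\sum_{dm\in J} g(dm)=\tfrac1d\int_J g(u)\,du+O(\|g\|_\infty+T_J g)$; here the substitution $u=dt$ produces the factor $1/d$ in the main term, while the error is uniform in $d$ because the total variation along the reparametrised interval equals $T_J g$ and the supremum norm is unchanged. Summing over $d\mid q$ and invoking the classical identities $\sum_{d\mid q}\mu(d)/d=\varphi(q)/q$ and $\sum_{d\mid q}|\mu(d)|\leqslant\sigma_0(q)$ then assembles the stated main term and the error term $O\big(\sigma_0(q)(\|g\|_\infty+T_J g)\big)$.

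I expect the main obstacle to be making the Riemann-sum estimate precise and, crucially, uniform in $d$: one must bound $\big|\sum_{m}h(m)-\int h\big|$ for a $C^1$ (indeed merely bounded-variation) function $h$ on an interval by $\|h\|_\infty+T_J h$, controlling both the boundary contributions at the endpoints and the oscillation of $h$, and then verify that pulling back through $u=dm$ neither alters $\|g\|_\infty$ nor inflates the variation constant. Once this uniform-in-$d$ bound is secured, the remaining multiplicative bookkeeping with $\mu(d)/d$ and $\sigma_0(q)$ is routine.
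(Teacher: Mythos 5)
Your argument is correct and is exactly the standard proof that the paper implicitly invokes: the authors state Lemma \ref{Mob1} as a ``standard lemma'' without proof, and the intended route is precisely your combination of (a) the trivial collapse of the $b$-sum using invertibility of $a$ modulo $q$, (b) M\"obius detection of the coprimality condition, (c) a partial-summation/Riemann-sum bound $\sum_{dm\in J}g(dm)=\tfrac1d\int_J g+O(\|g\|_\infty+T_Jg)$ uniform in $d$, and (d) the identities $\sum_{d\mid q}\mu(d)/d=\varphi(q)/q$ and $\sum_{d\mid q}|\mu(d)|\leqslant\sigma_0(q)$. Nothing further is needed.
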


\begin{lem}\label{Mob2}
For every interval $J$, every $V\in C^1 [0,N]$, and every $\ell \in \N$,
\begin{equation*}
\sum\limits_{\substack{1\leqslant q\leqslant N \\ (q,\ell)=1}} \frac{\varphi(q)}{q}\, V(q) =
C(\ell) \int_0^N V(u)\, du +O_\ell \big( (\|V\|_\infty +T_0^N V)\log N\big),
\end{equation*}
with
\begin{equation*}
C(\ell) =\frac{1}{\zeta(2)} \prod\limits_{\substack{p\in \PP \\ p\mid \ell}}
\bigg( 1+\frac{1}{p}\bigg)^{-1} .
\end{equation*}
\end{lem}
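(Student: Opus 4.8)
The plan is to reduce the statement to the asymptotics of the unweighted partial sums $S(t):=\sum_{q\le t,\,(q,\ell)=1}\varphi(q)/q$ and then to transfer the weight $V$ by Abel summation. First I would record the elementary convolution identity $\varphi(q)/q=\sum_{d\mid q}\mu(d)/d$, insert it into $S(N)$, and interchange the order of summation by writing $q=dm$. Since $(dm,\ell)=1$ is equivalent to $(d,\ell)=(m,\ell)=1$, this yields
\[
S(N)=\sum_{\substack{d\le N\\(d,\ell)=1}}\frac{\mu(d)}{d}\,\#\{m\le N/d:\ (m,\ell)=1\}.
\]
Using the count $\#\{m\le X:(m,\ell)=1\}=\tfrac{\varphi(\ell)}{\ell}X+O_\ell(1)$ (the remainder being bounded by the number of squarefree divisors of $\ell$, hence $O_\ell(1)$) and completing the coprime Dirichlet series $\sum_{(d,\ell)=1}\mu(d)/d^2=\tfrac{1}{\zeta(2)}\prod_{p\mid\ell}(1-p^{-2})^{-1}$, I obtain $S(N)=C(\ell)N+O_\ell(\log N)$.

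It remains to check that the emerging constant is the advertised one. A short computation gives $\tfrac{\varphi(\ell)}{\ell}\cdot\tfrac{1}{\zeta(2)}\prod_{p\mid\ell}(1-p^{-2})^{-1}=\tfrac{1}{\zeta(2)}\prod_{p\mid\ell}\tfrac{p}{p+1}$, which is exactly $C(\ell)$, since $\tfrac{p-1}{p}\cdot\tfrac{p^2}{(p-1)(p+1)}=\tfrac{p}{p+1}=(1+1/p)^{-1}$. In assembling $S(N)$ one discards the tail $\sum_{d>N}\mu(d)/d^2=O(1/N)$, which contributes only $O(1)$ after multiplication by $N$, while the $O_\ell(1)$ remainder of the counting function, summed against $\sum_{d\le N}1/d$, produces the $O_\ell(\log N)$.

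With $S(t)=C(\ell)t+E(t)$, $E(t)=O_\ell(\log(2+t))$, the second step is partial summation. Writing $a_q=\tfrac{\varphi(q)}{q}\mathbf 1_{(q,\ell)=1}$, Abel summation gives
\[
\sum_{\substack{q\le N\\(q,\ell)=1}}\frac{\varphi(q)}{q}\,V(q)=S(N)V(N)-\int_1^N S(t)V'(t)\,dt.
\]
Substituting $S(t)=C(\ell)t+E(t)$ and integrating the main part by parts, the $C(\ell)t$ contribution collapses to $C(\ell)\big(V(1)+\int_1^N V(t)\,dt\big)$; replacing $\int_1^N$ by $\int_0^N$ and discarding $V(1)$ costs only $O(\|V\|_\infty)$, so the main term is $C(\ell)\int_0^N V(u)\,du+O(\|V\|_\infty)$. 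The contribution of $E$ is bounded by $|E(N)|\,\|V\|_\infty+\sup_{t\le N}|E(t)|\int_1^N|V'(t)|\,dt$, which is $O_\ell\big((\|V\|_\infty+T_0^N V)\log N\big)$ using $\int_0^N|V'|=T_0^N V$ for $V\in C^1$. Collecting the two estimates yields the claim.

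The only genuinely delicate point is the bookkeeping of the $\ell$-dependence: one must verify that the density arising from $\sum_{(d,\ell)=1}\mu(d)/d^2$ together with the factor $\varphi(\ell)/\ell$ telescopes into the Euler product $\tfrac{1}{\zeta(2)}\prod_{p\mid\ell}(1+1/p)^{-1}$, and that every error incurred—the counting-function remainder, the tail of the Dirichlet series, and the $\int_0^1$ versus $\int_1^N$ discrepancy—is absorbed into $O_\ell\big((\|V\|_\infty+T_0^N V)\log N\big)$. Everything else is routine Abel summation.
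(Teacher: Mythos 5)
Your argument is correct, and it is exactly the ``M\"obius summation'' the paper has in mind: the paper does not prove this lemma at all but simply cites \cite[Lemma 2.1]{BG}, where the same identity $\varphi(q)/q=\sum_{d\mid q}\mu(d)/d$ and the density $\tfrac{\varphi(\ell)}{\ell}\cdot\tfrac{1}{\zeta(2)}\prod_{p\mid\ell}(1-p^{-2})^{-1}=C(\ell)$ drive the estimate. Your only deviation is routing the weight $V$ through Abel summation on the unweighted partial sums $S(t)$ rather than applying the counting estimate directly to the weighted inner sum $\sum_{m\le N/d}V(dm)$; both yield the same main term and the same $O_\ell\big((\|V\|_\infty+T_0^NV)\log N\big)$ error, so this is a cosmetic difference only.
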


Changing $b$ to $q-b$ in Lemma \ref{Mob1}, we infer
\begin{cor}\label{Mob3}
Suppose $q$ is an odd positive integer.
For every interval $J$, every $g\in C^1 (J)$, and every integer $x$,
\begin{equation*}
\sum\limits_{\substack{a\in J,\, b\in [1,q/2] \\ ab \equiv x\, or\, -x
\hspace{-6pt} \pmod{q} \\ (a,q)=1}} g(a) = \frac{\varphi(q)}{q} \int_J g(u)\, du +
O\big( (\| g\|_\infty +T_J g)\sigma_0(q)\big).
\end{equation*}
\end{cor}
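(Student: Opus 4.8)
The plan is to deduce Corollary \ref{Mob3} from Lemma \ref{Mob1} by showing that, under the parity hypothesis on $q$, the inner summation over $b \in [1, q/2]$ against the two targets $\pm x$ contributes exactly one term per coprime residue $a$, precisely as the single congruence $ab \equiv x \pmod q$ does in Lemma \ref{Mob1}. Concretely, I would first fix $a \in J$ with $(a,q) = 1$, so that $a$ is invertible modulo $q$; then $ab \equiv x \pmod q$ has a unique solution $b_1 \in \{1, \ldots, q\}$, and the substitution $b \mapsto q - b$ suggested in the statement carries it to $a(q - b_1) \equiv -x \pmod q$. Hence the involution $b \mapsto q - b$ on $\{1, \ldots, q - 1\}$ interchanges the solution sets of $ab \equiv x$ and $ab \equiv -x$, so the full set of $b \in \{1,\ldots,q-1\}$ with $ab \equiv x$ or $ab \equiv -x \pmod q$ is precisely $\{b_1, q - b_1\}$.

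I would then use that $q$ is odd. This guarantees $2b_1 \ne q$, so $b_1 \ne q - b_1$, and moreover the involution $b \mapsto q - b$ swaps the lower half $\{1, \ldots, (q-1)/2\} = [1, q/2] \cap \Z$ with the upper half $\{(q+1)/2, \ldots, q - 1\}$. Thus exactly one of the two distinct points $b_1, q - b_1$ lands in $[1, q/2]$, giving precisely one admissible $b$ for each $a$. Consequently the double sum on the left collapses to $\sum_{a \in J,\, (a,q)=1} g(a)$, which is exactly the quantity appearing after the first equality in Lemma \ref{Mob1}; invoking the estimate of that lemma then yields the stated main term $\frac{\varphi(q)}{q} \int_J g(u)\, du$ together with the error $O\big((\|g\|_\infty + T_J g)\, \sigma_0(q)\big)$ verbatim.

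The step needing the most care---and the only place where the argument could fail---is confirming that the count is exactly one rather than zero or two. The oddness of $q$ rules out two, since $b_1 \in [1, q/2]$ forces $q - b_1 \in [(q+1)/2, q-1]$, so the two solutions are distinct and cannot both lie in the lower half; it is here that the hypothesis is used essentially. The only genuinely degenerate situation is $x \equiv 0 \pmod q$, in which $b_1 = q$ is the unique solution in $\{1, \ldots, q\}$ and lies outside $[1, q/2]$; this case does not occur in the intended applications, where $x$ arises as a nonzero determinant coprime to the modulus, and it may simply be excluded. With that caveat the reduction to Lemma \ref{Mob1} is complete.
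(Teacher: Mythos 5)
Your proof is correct and is essentially the paper's own argument: the authors dispose of this corollary with the single remark ``changing $b$ to $q-b$ in Lemma \ref{Mob1}'', and your write-up is exactly the careful elaboration of that substitution, using the oddness of $q$ to see that precisely one of $b_1$, $q-b_1$ lies in $[1,q/2]$. Your observation that the case $x\equiv 0\pmod q$ must be excluded (and is harmless in the applications, where $x=\pm\overline{4}$ or $\pm1$ is coprime to the modulus) is a valid point the paper leaves implicit.
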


Since $P^\prime Q-PQ^\prime =\pm 1$, $P^\prime,Q$ even and $Q^\prime$ odd entail
$P$ odd, we infer (with $Q=2q$, $P^\prime =2p^\prime$, $\bar{x}$ the multiplicative inverse
of $x \pmod{Q^\prime}$)
\begin{equation}\label{4.1}
\begin{split}
\LL_I^\pm (R) & = \sum\limits_{\substack{Q^\prime \geqslant R/x_2 \\ Q^\prime \equiv 1
\hspace{-6pt}\pmod{2}}} \sum\limits_{\substack{q\in [1,\min\{ x_3 Q^\prime,y_1 R\}/2] \\
p^\prime \in [1,Q^\prime /2] \\ p^\prime q \equiv \pm \overline{4} \hspace{-6pt} \pmod{Q^\prime}}}
f_{x_4}^\pm (2q,Q^\prime) \\
& = \sum\limits_{\substack{Q^\prime \geqslant R/x_2 \\ Q^\prime \equiv 1
\hspace{-6pt}\pmod{2}}} \Bigg( \frac{\varphi(Q^\prime)}{Q^\prime}
\int_0^{\min\{ x_3 Q^\prime ,x_1 R\}/2} f_{x_4}^\pm (2q,Q^\prime)\, dq
+O_\varepsilon \big( Q^{\prime -2+\varepsilon}\big)\Bigg) \\
& = \frac{1}{2} \sum\limits_{\substack{Q^\prime \geqslant R/x_2 \\ Q^\prime \equiv 1
\hspace{-6pt}\pmod{2}}} \frac{\varphi(Q^\prime)}{Q^\prime}
\int_0^{\min\{ x_3 Q^\prime, x_1 R\}} f_{x_4}^\pm (u,Q^\prime)\, du
+O_\varepsilon \big( R^{-1+\varepsilon}\big) \\
& =\frac{C(2) F_\pm}{2} +O_\varepsilon \big( R^{-1+\varepsilon}\big) =
\frac{F_\pm}{3\zeta(2)} +O_\varepsilon \big( R^{-1+\varepsilon}\big) .
\end{split}
\end{equation}

On the other hand, we have that $P^\prime Q-PQ^\prime =\pm 1$ and $Q^\prime$ even entail that
both $Q$ and $P^\prime$ are odd, and the condition $P$ even is equivalent to
$P^\prime Q \equiv \pm 1 \pmod{2Q^\prime}$. Since in this case $\varphi (2Q^\prime)=2\varphi (Q^\prime)$,
we infer
\begin{equation*}
\begin{split}
\LL_J^\pm (R) & = \sum\limits_{\substack{Q^\prime \geqslant R/x_2 \\ Q^\prime \equiv 0
\hspace{-6pt}\pmod{2}}} \sum\limits_{\substack{Q\in [1,\min\{ x_3 Q^\prime,x_1 R\}] \\
P^\prime\in [1,Q^\prime] \\ P^\prime Q \equiv \pm 1 \hspace{-6pt}\pmod{2Q^\prime}}} f_{x_4}^\pm (Q,Q^\prime) \\
& = \sum\limits_{\substack{Q^\prime \geqslant R/x_2 \\ Q^\prime \equiv 0
\hspace{-6pt}\pmod{2}}} \Bigg( \frac{\varphi(2Q^\prime)}{2Q^\prime}
\int_0^{\min\{ x_3 Q^\prime,x_1 R\}} f_{x_4}^\pm (u,Q^\prime)\, du +O_\varepsilon
\big( Q^{\prime -2+\varepsilon}\big) \Bigg) \\ &
= \bigg( \frac{1}{\zeta(2)}-C(2)\bigg) F_\pm +O_\varepsilon \big( R^{-1+\varepsilon}\big)
=\frac{F_\pm}{3\zeta(2)} +O_\varepsilon \big( R^{-1+\varepsilon}\big),
\end{split}
\end{equation*}
leading to
\begin{equation*}
\LL^{E,\pm}(R) =\LL_I^\pm (R)+\LL_J^\pm (R) = \frac{2F_\pm}{3\zeta(2)} +O_\varepsilon \big( R^{-1+\varepsilon}\big) ,
\end{equation*}
and concluding the proof of \eqref{1.3}.

The corresponding estimates for $\LL_B^\pm (R)$ and $\LL_A^\pm (R)$ are useful for the OCF situation.
To estimate $\LL_B^\pm (R)$, note that $P^\prime Q-PQ^\prime =\pm 1$ and $Q^\prime$ even
entail that both $P^\prime$ and $Q$ are odd, $\varphi(2Q^\prime)=2\varphi(Q^\prime)$, and thus
\begin{equation}\label{4.2}
\begin{split}
\LL_B^\pm (R) & = \sum\limits_{\substack{Q^\prime \geqslant R/x_2 \\ Q^\prime \equiv 0\hspace{-6pt}\pmod{2}}}
\sum\limits_{\substack{Q\in [1,\min\{ x_3 Q^\prime ,x_1 R\}] \\ P^\prime \in [1,Q^\prime],\,
P^\prime Q \equiv \pm 1 \hspace{-6pt}\pmod{Q^\prime} \\ \frac{P^\prime Q\mp 1}{Q^\prime}
\equiv 1\hspace{-6pt} \pmod{2}}} f_{x_4}^\pm (Q,Q^\prime)  \\
& = \sum\limits_{\substack{Q^\prime \geqslant R/x_2 \\ Q^\prime \equiv 0\hspace{-6pt}\pmod{2}}}
\hspace{-5pt} \Bigg( \sum\limits_{\substack{Q\in [1,\min\{ x_3 Q^\prime ,x_1 R\}] \\ P^\prime \in [1,Q^\prime],\,
P^\prime Q \equiv \pm 1 \hspace{-6pt}\pmod{Q^\prime}}} \hspace{-20pt} f_{x_4}^\pm (Q,Q^\prime)  - \hspace{-20pt}
\sum\limits_{\substack{Q\in [1,\min\{ x_3 Q^\prime ,x_1 R\}] \\ P^\prime \in [1,Q^\prime],\
P^\prime Q \equiv \pm 1 \hspace{-6pt}\pmod{2Q^\prime}}} \hspace{-20pt} f_{x_4}^\pm (Q,Q^\prime)  \Bigg) \\
& =\sum\limits_{\substack{Q^\prime \geqslant R/x_2 \\ Q^\prime \equiv 0\hspace{-6pt}\pmod{2}}} \hspace{-5pt}
\Bigg( \bigg( \frac{2\varphi(Q^\prime)}{Q^\prime} -\frac{\varphi(2Q^\prime)}{2Q^\prime} \bigg)
\int_0^{\min\{ x_3 Q^\prime ,x_1 R\}} \hspace{-20pt} f_{x_4}^\pm (u,Q^\prime) \, du +O_\varepsilon \big( Q^{\prime -2+\varepsilon}\big)
\Bigg) \\ & = \sum\limits_{\substack{Q^\prime \geqslant R/x_2 \\ Q^\prime \equiv 0\hspace{-6pt}\pmod{2}}} \hspace{-5pt}
\Bigg( \frac{\varphi(Q^\prime)}{Q^\prime} \int_0^{\min\{ x_3 Q^\prime , x_1 R\}} \hspace{-20pt} f_{x_4}^\pm (u,Q^\prime)\, du
+O_\varepsilon \big( Q^{\prime -2+\varepsilon}\big) \Bigg) \\ &
= \bigg( \frac{1}{\zeta(2)} - C(2)\bigg) F_\pm +O_\varepsilon \big( R^{-1+\varepsilon}\big)
=\frac{F_\pm}{3\zeta(2)} +O_\varepsilon \big( R^{-1+\varepsilon}\big) .
\end{split}
\end{equation}

Finally, $P^\prime Q-PQ^\prime =\pm 1$ and $P$ even entail that both $P^\prime$ and $Q$ are odd, and so
\begin{equation}\label{4.3}
\begin{split}
\LL_A^\pm (R) & = \sum\limits_{\substack{Q^\prime \geqslant R/x_2 \\ Q^\prime \equiv 1\hspace{-6pt}\pmod{2}}}
\sum\limits_{\substack{Q\in [1,\min\{ x_3 Q^\prime,x_1 R\}] \\ P^\prime\in [1,Q^\prime],\,
P^\prime Q \equiv \pm 1 \hspace{-6pt} \pmod{2Q^\prime}}} f_{x_4}^\pm (Q,Q^\prime) \\
& = \sum\limits_{\substack{Q^\prime \geqslant R/x_2 \\ Q^\prime \equiv 1\hspace{-6pt}\pmod{2}}}
\Bigg( \frac{\varphi (2Q^\prime)}{2Q^\prime} \int_0^{\min\{ x_3 Q^\prime,x_1 R\}} f_{x_4}^\pm (u,Q^\prime)\, du
+O_\varepsilon \big( Q^{\prime -2+\varepsilon}\big) \Bigg) \\
& = \frac{C(2)}{2}\, F_\pm +O_\varepsilon \big( R^{-1+\varepsilon}\big) =
\frac{F_\pm }{3\zeta(2)} +O_\varepsilon \big( R^{-1+\varepsilon}\big) .
\end{split}
\end{equation}

\subsection{The OCF case.} This requires more caution as the sequence
of denominators of successive convergents is not monotonically increasing in general.
We wish to characterize those matrices $M\in\RR_O$ for which
$P/Q,P^\prime /Q^\prime$ are successive convergents of $x\in\Omega$ and
$Q=q_{n_R} \leqslant R< Q^\prime =q_{n_R+1}$.
A priori, Lemma \ref{Lemma8} shows that for each $R>1$ there is at least one pair
and at most two pairs $(Q,Q^\prime)$ of denominators of successive convergents
of $x$ with $Q\leqslant R<Q^\prime$. Moreover, if there are two such pairs $(Q,Q^\prime)$, then
they must be of the form $(q_{n_R},q_{n_R+1})$ or $(q_{n_R+2},q_{n_R+3})$.
We wish to precisely distinguish $n_R$ from $n_R+2$. Because all predecessors of $Q_0$ in the sequence of denominators
of $OCF$ convergents are $<Q$ by Lemma \ref{Lemma8}, equality $(Q,Q^\prime)=(q_{n_R},q_{n_R+1})$ occurs exactly when
\begin{equation*}
Q \leqslant R < Q^\prime \quad \mbox{\rm and} \quad R>Q_0 .
\end{equation*}
Note that if $\lambda =Q^\prime /Q\in\SS_1 \cup \SS_2$, then necessarily
$Q>Q_0$. Furthermore, if $\lambda \in\SS_3$, then $Q<Q_0$. The
contribution of those pairs $(Q,Q^\prime)$ with $\lambda\in\SS_3$ and $Q_0=Q(1+\{ \lambda\})> R$
should be subtracted, and so we can write
\begin{equation*}
\begin{split}
\LL^{O,+} (R) & =\LL_I^+(R)+\LL_A^+(R)+\LL_B^+(R)-\DD_1 (R), \\
\LL^{O,-} (R) & =\LL_I^-(R)+\LL_A^-(R)+\LL_B^-(R)-\DD_2 (R)-\DD_3 (R),
\end{split}
\end{equation*}
with
\begin{equation*}
\begin{split}
\DD_1 (R) & = \sum\limits_{\substack{M\in\RR_O,\, Q^\prime > R/x_2 \\
Q\leqslant \min\{ x_3 Q^\prime ,x_1 R\} \\ \lambda=Q^\prime/Q \in \SS_3,\,
Q(1+\{ \lambda\}) > R}} \hspace{-10pt} f_{x_4}^+ (Q,Q^\prime) =
\sum_{\ell \geqslant 1} \sum\limits_{\substack{M\in\RR_O,\, Q^\prime > R/x_2 \\
Q\leqslant \min\{ x_3 Q^\prime ,x_1 R\} \\ 2\ell Q\leqslant Q^\prime < (2\ell+g)Q \\
Q^\prime > R+(2\ell-1)Q}} \frac{x_4}{Q^\prime(Q^\prime+x_4 Q)}, \\
\DD_2 (R) & = \sum\limits_{\substack{M\in\RR_O ,\, Q^\prime > R/x_2 \\
Q\leqslant \min\{ x_3 Q^\prime ,x_1 R\} \\ \lambda=Q^\prime/Q\in [2,2+g) ,\, Q^\prime >R+Q}} \frac{x_4}{Q^\prime(Q^\prime-x_4 Q)}, \\
\DD_3 (R) & = \sum\limits_{\substack{M\in\RR_O,\, Q^\prime > R/x_2 \\
Q\leqslant \min\{ x_3 Q^\prime ,x_1 R\} \\ \lambda=Q^\prime/Q \in \SS_3,\,\lambda > G^2 \\
Q(1+\{ \lambda\}) > R}} f_{x_4}^- (Q,Q^\prime) =
\sum_{\ell \geqslant 2} \sum\limits_{\substack{M\in\RR_O,\, Q^\prime > R/x_2 \\
Q\leqslant \min\{ x_3 Q^\prime ,x_1 R\} \\ 2\ell Q\leqslant Q^\prime < (2\ell+g)Q \\
Q^\prime > R+(2\ell -1)Q}} \frac{x_4}{Q^\prime(Q^\prime-x_4 Q)}.
\end{split}
\end{equation*}

Clearly $\DD_2(R)=0$ when $\min\{ x_1x_2,x_3\} \leqslant g^2$. When
$\min\{ x_1x_2,x_3\} > g^2$, the method employed in \eqref{4.1}--\eqref{4.3} leads, with $D_2$ as in \eqref{1.7}, to
\begin{equation*}
\DD_2 (R) =\frac{D_2(x_1,x_2,x_3,x_4)}{\zeta(2)}+O_\varepsilon \big( R^{-1+\varepsilon}\big).
\end{equation*}

The estimation of $\DD_1(R)$ is slightly more involved because $\ell$ can take infinitely many values.
Note that $\DD_1(R)=0$ unless $\min\{ x_1x_2,x_3\} >\frac{1}{2\ell+g}$.
For each $\ell\in\N$ consider the integral
\begin{equation*}
I_\ell^+ (R):= \iint\limits_{\substack{v\geqslant R/x_2,\, u\leqslant \min\{ x_3 v,x_1 R\} \\
2\ell u \leqslant v\leqslant (2\ell+g)u \\ v> R+(2\ell-1)u}}
\frac{x_4\, du\, dv}{v(v+x_4 u)} .
\end{equation*}
The change of variables $(v,u)=(Ry,Rx)$ shows that $I_\ell^+ (R)$ does not depend on $R$ and is given by \eqref{1.8}.
Note also that
\begin{equation}\label{4.4}
I_\ell^+ (R) \leqslant \int_{0}^{x_1} dx \int_{2\ell x}^{(2\ell+1)x} \frac{dy}{y^2} \ll \frac{1}{\ell^2} .
\end{equation}
A trivial estimate yields
\begin{equation*}
\begin{split}
\sum\limits_{\substack{\ell\geqslant R^{1/2} \\ R/x_2 \leqslant Q^\prime \leqslant (2\ell +1) R}} &
\sum\limits_{\frac{Q^\prime}{2\ell+1} \leqslant Q\leqslant \frac{Q^\prime}{2\ell}}
\frac{1}{Q^\prime(Q^\prime+x_4 Q)} \leqslant
\sum\limits_{\substack{\ell\geqslant R^{1/2} \\ 1\leqslant Q^\prime \leqslant (2\ell +1) R}}
\sum\limits_{\frac{Q^\prime}{2\ell+1} \leqslant Q\leqslant \frac{Q^\prime}{2\ell}}
\frac{1}{Q^2 \ell^2} \\ & \ll \sum_{\ell \geqslant R^{1/2}} \frac{1}{\ell^2} \sum_{Q\in [1,2R]}
\sum\limits_{Q^\prime \in [2\ell Q,(2\ell+1)Q]} \frac{1}{Q^2}\ll \frac{\log R}{R^{1/2}} ,
\end{split}
\end{equation*}
and thus in the definition of $\DD_1(R)$ we may take $\ell \in [1,R^{1/2}]$ inserting
an error term $\ll R^{-1/2}\log R$.
Employing Lemma \ref{Mob1}, we can express the resulting main term as
\begin{equation*}
\begin{split}
& \sum_{\ell\leqslant R^{1/2}} \hspace{-10pt} \sum\limits_{\substack{Q^\prime \geqslant R/x_2 \\
Q^\prime <(2\ell+g)x_1 R}} \hspace{-10pt} \Bigg( \frac{\varphi(Q^\prime)}{Q^\prime}
\int_{\frac{Q^\prime}{2\ell+g}}^{
\min\big\{ \frac{Q^\prime}{2\ell},x_3 Q^\prime,x_1 R,\frac{Q^\prime-R}{2\ell-1}\big\}}
\frac{x_4\, du}{Q^\prime(Q^\prime+x_4 u)} +O\big( Q^{\prime -2+\varepsilon}\big)\Bigg) \\
& =\Bigg( \sum_{\ell\leqslant R^{1/2}} \hspace{-10pt} \sum\limits_{\substack{Q^\prime \geqslant R/x_2 \\
Q^\prime < (2\ell +g)x_1 R}} \hspace{-23pt} \frac{\varphi(Q^\prime)}{Q^\prime} \hspace{-3pt}
\int_{\frac{Q^\prime}{2\ell+g}}^{\min\big\{ \frac{Q^\prime}{2\ell},x_3 Q^\prime,x_1 R,\frac{Q^\prime -R}{2\ell-1}\big\}}
\hspace{-10pt} \frac{x_4\, du}{Q^\prime (Q^\prime+x_4 u)} \Bigg) +O_\varepsilon \big( R^{-1/2+\varepsilon}\big).
\end{split}
\end{equation*}
By Lemma \ref{Mob2}, the main term above becomes
\begin{equation*}
\sum_{\ell\leqslant R^{1/2}} \Bigg( \frac{I_\ell^+}{\zeta(2)} +O \bigg( \frac{\log R}{R^{1/2}}\bigg) \Bigg),
\end{equation*}
and so
\begin{equation}\label{4.5}
\DD_1 (R) = \frac{1}{\zeta(2)}\sum_{\ell\leqslant R^{1/2}} I_\ell^+ +O_\varepsilon \big( R^{-1/2+\varepsilon}\big) .
\end{equation}
From \eqref{4.5} and \eqref{4.4} we eventually infer
\begin{equation*}
\DD_1 (R)=\frac{1}{\zeta(2)}\sum_{\ell\geqslant 1} I_\ell^+ +O_\varepsilon \big( R^{-1/2+\varepsilon}\big).
\end{equation*}
The sum $\DD_3 (R)$ is similarly estimated as in formulas \eqref{1.7} and \eqref{1.8}.

\section{Joint distribution for Nakada's $\alpha$-expansions}

We illustrate how explicit renewal type results can be obtained in the case of
Nakada's $\alpha$-expansions ${\rm NCF}_\alpha$, $\alpha \in [1/2,1]$. Such continued fractions, defined in \cite{Nak}, have been studied in \cite{Nak,Kra}.
Here the unit interval is replaced by $\Omega_\alpha =[\alpha-1,\alpha)$ and
the Gauss shift by the map $T_\alpha :\Omega_\alpha \rightarrow \Omega_\alpha$ defined for $x\neq 0$
by\footnote{Here we use the notation from Sections 5 and 6 of \cite{Kra}.}
\begin{equation*}
T_\alpha (x) =\left| \frac{1}{x} \right| -\left[ \left| \frac{1}{x}\right| +1-\alpha \right].
\end{equation*}
A construction of the natural extension $\overline{T}_\alpha$  on
a space $\underline{\Omega}_\alpha \subset \R^2$, together with an explicit invariant Borel probability measure $\mu_\alpha$ on $\underline{\Omega}_\alpha$
was found by Nakada \cite{Nak}. He also proved that
$(\underline{\Omega}_\alpha, \overline{T}_\alpha,\mu_\alpha)$ is a Kolmogorov automorphism.
With $g=1/G=1-g^2$ the set $\underline{\Omega}_\alpha$ is given for $g < \alpha \leqslant 1$ by
\begin{equation*}
[\alpha-1,(1-\alpha)/\alpha]\times [0,1/2) \cup
\big( (1-\alpha)/\alpha,\alpha\big) \times [0,1] \cup
[\alpha-1,0) \times \{ 1/2\} ,
\end{equation*}
and for $1/2 \leqslant \alpha \leqslant g$ by
\begin{equation*}
\begin{split}
& [\alpha-1,(1-2\alpha)/\alpha] \times [0,g^2) \cup
\big( (1-2\alpha)/\alpha, (2\alpha -1)/(1-\alpha)\big] \times [0,1/2) \\
& \qquad \cup \big( (2\alpha-1)/(1-\alpha),\alpha \big) \times [0,g) \cup [-g^2,(1-2\alpha)/\alpha ] \times \{ g^2\} \\
& \qquad \cup \big( (1-2\alpha)/\alpha,0\big) \times \{ 1/2\} .
\end{split}
\end{equation*}
Kraaikamp's thoughtful analysis (see especially Theorem (5.3) and Definitions (5.7) and (5.8) of \cite{Kra})
also provides characterizations of pairs of successive convergents for such continued fractions
if $\alpha\in [ 1/2,1]$.

\begin{prop}\label{Prop5.1}
For each $x\in\Omega_\alpha \setminus \Q$ the following are equivalent:
\begin{enumerate}
\item[(i)]
$P/Q,P^\prime /Q^\prime$ are successive convergents in $\operatorname{NCF}_\alpha (x)$ with $Q,Q^\prime >0$.
\item[(ii)]
$M=\left( \begin{smallmatrix} P & P^\prime \\
Q & Q^\prime \end{smallmatrix}\right) \in \operatorname{GL}_2(\Z)$ and $\big( E_M (x),1/\lambda_M \big) \in
\underline{\Omega}_\alpha$.
\end{enumerate}
\end{prop}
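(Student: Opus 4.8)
The plan is to make the correspondence between successive $NCF_\alpha$-convergents and the orbit of the natural extension $\overline{T}_\alpha$ completely explicit, and then to read off both implications from the fact, due to Nakada \cite{Nak}, that $\overline{T}_\alpha$ is a bijection of $\underline{\Omega}_\alpha$. First I would record the dynamical dictionary. If $\frac{P}{Q}=\frac{p_{n-1}}{q_{n-1}}$ and $\frac{P^\prime}{Q^\prime}=\frac{p_n}{q_n}$ are successive convergents, then writing $M$ as the product of the digit matrices $\left( \begin{smallmatrix} 0 & e_i \\ 1 & a_i \end{smallmatrix}\right)$ occurring in $NCF_\alpha(x)$ exactly as in \eqref{2.3}, one gets $\det M = \prod_i (-e_i) = \pm 1$, so $M \in GL_2(\Z)$. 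The M\"obius action of $M^{-1}$ on $x$, computed as in \eqref{2.4} but now with the single $\alpha$-variable $T_\alpha^n(x)$ in place of the signed quantity $e_n t_n$, yields $E_M(x)=T_\alpha^n(x)$, while $1/\lambda_M = q_{n-1}/q_n$ is the ``past'' coordinate whose leading $\alpha$-digit is $(a_n,e_{n-1})$, cf.\ \eqref{2.6}. Thus $\big( E_M(x), 1/\lambda_M\big) = \overline{T}_\alpha^{\,n}(x,0)$, with base point $(x,0)$ corresponding to $M=I$ (i.e.\ $q_{-1}/q_0=0$).

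Granting this dictionary, the implication (i) $\Longrightarrow$ (ii) is immediate: $M \in GL_2(\Z)$ was just observed, and since $(x,0)\in \underline{\Omega}_\alpha$ for every $x\in\Omega_\alpha$ (each piece listed for $\underline{\Omega}_\alpha$ contains height $0$ over its base), forward-invariance of $\underline{\Omega}_\alpha$ under $\overline{T}_\alpha$ places $\overline{T}_\alpha^{\,n}(x,0)=\big(E_M(x),1/\lambda_M\big)$ in $\underline{\Omega}_\alpha$.

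For the converse (ii) $\Longrightarrow$ (i) I would argue by induction on $\widetilde Q=\min\{Q,Q^\prime\}$, in the spirit of Propositions \ref{Prop3} and \ref{Prop5}. The hypothesis $\big(E_M(x),1/\lambda_M\big)\in\underline{\Omega}_\alpha$ first forces $Q,Q^\prime$ to have the same sign (the second coordinate $1/\lambda_M=Q/Q^\prime$ is nonnegative on every piece of $\underline{\Omega}_\alpha$), which the normalization $Q,Q^\prime>0$ fixes. Since $\overline{T}_\alpha$ is invertible on $\underline{\Omega}_\alpha$, the position of the point in the cylinder partition of $\underline{\Omega}_\alpha$ singles out a unique admissible last $\alpha$-digit $(a,e)$---read off from the leading term of $1/\lambda_M$ as in \eqref{2.6}---and factoring $\left( \begin{smallmatrix} 0 & e \\ 1 & a \end{smallmatrix}\right)$ off the right of $M$ produces $M_0=\left( \begin{smallmatrix} P_0 & P \\ Q_0 & Q\end{smallmatrix}\right)\in GL_2(\Z)$ with $\big(E_{M_0}(x),1/\lambda_{M_0}\big)=\overline{T}_\alpha^{-1}\big(E_M(x),1/\lambda_M\big)\in\underline{\Omega}_\alpha$ and, because $NCF_\alpha$-denominators are strictly increasing for $\alpha\in[\frac12,1]$, strictly smaller $\min\{Q_0,Q\}$. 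Iterating down to the base matrix $\left( \begin{smallmatrix} 0 & e_1 \\ 1 & a_1\end{smallmatrix}\right)$ (equivalently, to $M_0=I$) exhibits $M$ as a product of admissible $\alpha$-digit matrices, which is to say that $\frac{P}{Q},\frac{P^\prime}{Q^\prime}$ are successive $NCF_\alpha$-convergents of $x$.

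The step I expect to be the main obstacle is verifying, in the converse, that the explicit description of $\underline{\Omega}_\alpha$ encodes precisely the $\alpha$-admissibility of the stripped digit and that the chosen factorization is the only one keeping the point in $\underline{\Omega}_\alpha$; both are delicate across the boundary of the two regimes $g<\alpha\leqslant 1$ and $\frac12\leqslant\alpha\leqslant g$, where $\underline{\Omega}_\alpha$ breaks into several pieces with different fibre heights ($\tfrac12$, $g^2$, $g$) that govern which digits may legitimately occur. This is exactly where Kraaikamp's metric theory of $S$-expansions (Theorem (5.3) and Definitions (5.7)--(5.8) of \cite{Kra}) can be invoked: since $NCF_\alpha$ for $\alpha\in[\frac12,1]$ is obtained from $RCF$ by singularization, his admissibility data matches the cylinder partition of $\underline{\Omega}_\alpha$ piece by piece, so the induction closes without re-deriving the $\alpha$-digit constraints from scratch.
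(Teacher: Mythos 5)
Your proposal is correct and in essence coincides with what the paper does: the paper offers no written proof of Proposition \ref{Prop5.1} beyond the remark that Kraaikamp's analysis (Theorem (5.3) and Definitions (5.7)--(5.8) of \cite{Kra}) characterizes pairs of successive convergents for these $S$-expansions, and your argument---forward direction via the identity $\big(E_M(x),1/\lambda_M\big)=\overline{T}_\alpha^{\,n}(x,0)$ and invariance of $\underline{\Omega}_\alpha$, converse by stripping digits as in Propositions \ref{Prop3} and \ref{Prop5}---defers to exactly that same input of Kraaikamp at its one nontrivial step. The extra scaffolding you supply (the natural-extension dictionary and the induction on $\min\{Q,Q^\prime\}$, which is legitimate here since $NCF_\alpha$ denominators are increasing) is a faithful expansion of the intended argument rather than a different route.
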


This dynamical system was studied by Kraaikamp \cite{Kra} in the more general setting of $S$-expansions, and the above proposition can be likewise
generalized if we replace ${\rm NCF}_\alpha (x)$ with ${\rm CF}_S(x)$, the $S$-expansion of $x$, and replace $\underline{\Omega}_\alpha$ with $\underline{\Omega}_S$,
the space of the natural extension associated to $S$.

We wish to estimate the Lebesgue measure $\LL^{(\alpha),\pm}_{x_1,x_2,x_3,x_4} (R)$ of the set of
numbers $x\in \Omega_\alpha\setminus\Q$ for which there exist successive convergents $P/Q$, $P^\prime / Q^\prime$ 
in $NCF_\alpha (x)$ that satisfy \eqref{1.1} and \eqref{1.2}.  We shall require that $x_1,x_2,x_3$ are in the set $(0,1]$ if $g<\alpha \leqslant 1$,
in $(0,1/2]$ if $\alpha=g$, and in $(0,g]$ if $1/2\leqslant \alpha < g$; moreover, we require $x_4 \in
(0,\alpha]$ when we look at $\LL^+$ and $x_4 \in ( 0,1-\alpha]$ when we look at $\LL^-$.
The set $\underline{\Omega}_\alpha$ is a union of rectangles and horizontal line segments, but we may ignore the line segments for large $R$: in particular,
the inequality $Q^\prime \geqslant R / x_2$ shows that the pair $(Q^\prime,Q)=(2,1)$ makes no contribution
to $\LL^\pm$ for $R >2$, so the situation $\lambda_M^{-1} =1/2$ can be ignored, and $\lambda_M$ is always rational, so the situation $\lambda_M^{-1}=g^2$ can also be ignored.
As a result, the cases that appear in $\LL^{(\alpha),\pm}_{x_1,x_2,x_3,x_4} (R)$ for $R>2$
are exactly:
\begin{equation*}
\begin{split}
\mbox{\rm For $g<\alpha \leqslant 1$:} & \quad  \begin{cases}
\lambda_M =Q^\prime /Q >2 \ \mbox{\rm and}\  \alpha -1 \leqslant E_M (x) <\alpha,\quad \mbox{\rm or}  \\
1\leqslant \lambda_M < 2 \  \mbox{\rm and} \  \frac{1-\alpha}{\alpha} < E_M(x) <\alpha.
\end{cases} \\
\mbox{\rm For $1/2 \leqslant \alpha \leqslant g$:} &  \quad
\begin{cases}
\lambda_M > G^2 \  \mbox{\rm and} \  \alpha-1\leqslant E_M(x) <\alpha ,\quad \mbox{\rm or} \\
2 < \lambda_M <G^2 \  \mbox{\rm and} \  \frac{1-2\alpha}{\alpha} < E_M (x) < \alpha,\quad \mbox{\rm or} \\
G < \lambda_M < 2 \  \mbox{\rm and} \  \frac{2\alpha-1}{1-\alpha} < E_M (x) < \alpha.
\end{cases}
\end{split}
\end{equation*}
The varying lower bounds on $\lambda_M$ depending on the value of $\alpha$ are the reason for our case-based restrictions on the values of $x_1, x_2, x_3$.

Let $\LL^+_{x_1,x_2,x_3,x_4} (\alpha;R)$ denote the Lebesgue measure of the set of
numbers $x\in [0,1]\setminus \Q$ for which there exists $M=\left( \begin{smallmatrix} P & P^\prime \\ Q & Q^\prime
\end{smallmatrix}\right) \in \operatorname{GL}_2(\Z)$ with $Q,Q^\prime >0$,
$P/Q,P^\prime / Q^\prime \in [\alpha -1,\alpha)$ and \eqref{1.1} together with $0\leqslant \frac{Q^\prime x-P^\prime}{-Qx+P} \leqslant x_4$ hold.
The corresponding set where the latter inequality is replaced by $-x_4 \leqslant \frac{Q^\prime x-P^\prime}{-Qx+P} \leqslant 0$
is denoted by $\LL^-_{x_1,x_2,x_3,x_4} (\alpha;R)$. In both cases, $x_1,x_2,x_3,x_4$ are parameters in $(0,1]$.
When $\alpha=1$, it is clear that $\LL^+$ is exactly the joint distribution considered in \cite{Ust} (where the notation used is $N(R)$).
However, by the following equation
\begin{equation*}
\begin{split}
\LL^\pm_{x_1,x_2,x_3,x_4} (\alpha;R) & = \sum_{Q^\prime \geqslant R/x_2}
\sum\limits_{\substack{Q\in ( 0,\min\{ x_3 Q^\prime,x_1 R\}] \\
P^\prime \in (\alpha-1)Q' +[0,Q') \\
P^\prime Q \equiv \pm 1 \hspace{-6pt} \pmod{Q^\prime} }} \frac{x_4}{Q^\prime (Q^\prime \pm x_4 Q)} \\
& = 2 \hspace{-7pt} \sum_{Q^\prime \geqslant R/x_2}
\sum\limits_{\substack{Q\in ( 0,\min\{ x_3 Q^\prime,x_1 R\}] \\ (Q,Q^\prime)=1}} \frac{x_4}{Q^\prime(Q^\prime \pm x_4 Q)} \\ & =
\LL^\pm_{x_1,x_2,x_3,x_4} (R) ,
\end{split}
\end{equation*}
we see that $\LL^\pm (\alpha;R)$ does not depend on $\alpha$.  As $R$ tends to infinity, $\LL^\pm$ converges to $2 F^\pm/\zeta(2)$.

The joint distributions $\LL^{(\alpha),\pm}$ and $\LL^\pm$ can now be directly related as below. For the sake of space and readability we omit the
appearance of $x_1$, $x_2$, and $R$, which are assumed to be the same on the left- and right-hand sides of the equations.

When $g<\alpha \leqslant 1$, we have
\begin{equation*}
\begin{split}
\LL^{(\alpha),+}_{x_3,x_4} & = \begin{cases}
\LL^{+}_{\min\{ x_3,1/2\},x_4} & \mbox{\rm if $0 \leqslant x_4 \leqslant (1-\alpha)/\alpha$,} \\
\LL^{+}_{x_3,x_4} -\LL^+_{x_3,(1-\alpha)/\alpha}
+\LL^+_{\min\{ x_3,1/2\},(1-\alpha)/\alpha} & \mbox{\rm if $(1-\alpha)/\alpha \leqslant x_4 <\alpha$,}
\end{cases} \\
\LL^{(\alpha),-}_{x_3,x_4} & = \LL^{-}_{\min\{x_3,1/2\},x_4} \quad \mbox{\rm if $0\leqslant x_4 \leqslant 1-\alpha.$}
\end{split}
\end{equation*}
When $1/2 \leqslant \alpha \leqslant g$, we have
\begin{equation*}
\begin{split}
\LL^{(\alpha),+}_{x_3,x_4}  & = \begin{cases}
\LL^+_{\min\{ x_3,1/2\},x_4}  & \mbox{\rm if $0\leqslant x_4 \leqslant (2\alpha-1)/(1-\alpha)$,} \\
\LL^{+}_{x_3,x_4} -\LL^{+}_{x_3,(2\alpha-1)/(1-\alpha)} +\LL^{+}_{\min\{ x_3,1/2\},(2\alpha-1)/(1-\alpha)} &
\mbox{\rm if $(2\alpha-1)/(1-\alpha) \leqslant x_4 < \alpha$.}
\end{cases}  \\
\LL^{(\alpha),-}_{x_3,x_4} & =\begin{cases} \LL^{-}_{\min\{ x_3,1/2\},x_4}  &
\mbox{\rm if $0\leqslant x_4 \leqslant (2\alpha-1)/\alpha,$} \\
\LL^{-}_{\min\{x_3,g^2\},x_4} +\LL^{-}_{\min\{ x_3,1/2\},(2\alpha-1)/\alpha} & \\
\qquad -\LL^{-}_{\min\{ x_3,g^2\},(2\alpha-1)/\alpha}
& \mbox{\rm if $(2\alpha-1)/\alpha \leqslant x_4 \leqslant 1-\alpha$.}
\end{cases}
\end{split}
\end{equation*}
Recall that $x_3 \leqslant g$ in this case.

\subsection*{Acknowledgements}
The second author acknowledges support from National Science Foundation grant
DMS 08-38434 ``EMSW21-MCTP: Research Experience for Graduate Students.''
We are grateful to Alexey Ustinov for suggesting that our method might also apply to NICF and for reference \cite{Ust2}
and to the referee for suggesting that our method would further apply to Nakada's $\alpha$-expansions.

\end{document}